\newtheorem{theorem}{Theorem}[section]
\newtheorem{lemma}{Lemma}[section]
\newtheorem{remark}{Remark}[section]
\numberwithin{equation}{section}
\newtheorem*{theorem*}{Theorem}
\newcommand{\R}{\mathbb{R}}
\newcommand{\N}{\mathbb{N}}
\newcommand{\be}{\begin{equation}}
\newcommand{\ee}{\end{equation}}
\begin{document}
\subjclass[2010]{Primary 42A45. Secondary 42C10, 42A38, 33C45.}

\title[Multipliers of Laplace Transform Type for Laguerre and Hermite\dots]{Multipliers of Laplace Transform Type for Laguerre and Hermite Expansions}

\author{Pablo L.  De N\'apoli}
\address{Departamento de Matem\'atica \\
Facultad de Ciencias Exactas y Naturales \\ 
Universidad de Buenos Aires \\
Ciudad Universitaria \\
1428 Buenos Aires, Argentina}
\email{pdenapo@dm.uba.ar}

\author{Irene Drelichman}
\address{Departamento de Matem\'atica \\
Facultad de Ciencias Exactas y Naturales \\ 
Universidad de Buenos Aires \\
Ciudad Universitaria \\
1428 Buenos Aires, Argentina}
\email{irene@drelichman.com}

\author{Ricardo G. Dur\'an}
\address{Departamento de Matem\'atica \\
Facultad de Ciencias Exactas y Naturales \\ 
Universidad de Buenos Aires \\
Ciudad Universitaria \\
1428 Buenos Aires, Argentina}
\email{rduran@dm.uba.ar}

\thanks{Supported by ANPCyT under grant PICT 01307, by Universidad de
Buenos Aires under grants X070 and X837 and by CONICET under grants PIP 11420090100230 and PIP 11220090100625. The first and third authors are members of
CONICET, Argentina.}

\begin{abstract}
We present a new criterion for the weighted   $L^p-L^q$ boundedness  of multiplier operators for Laguerre and Hermite expansions that arise from a Laplace-Stieltjes transform. As a special case, we recover known results on weighted estimates for Laguerre and Hermite fractional integrals with a unified and simpler approach. 
\end{abstract}

\keywords{Laguerre expansions, Hermite expansions, harmonic oscillator, fractional integration, multipliers}

\maketitle

\section{Introduction}

The aim of this paper is to obtain weighted estimates for multipliers of Laplace transform type for Laguerre and Hermite orthogonal expansions. To explain our results, consider the system of Laguerre functions, for fixed $\alpha>-1$, given by
\begin{equation*}
 l_k^\alpha(x)= \left( \frac{k!}{\Gamma(k+\alpha+1)}
\right)^{\frac12} e^{-\frac{x}{2}} L_k^\alpha(x) \ ,\quad k \in \N_0 
\end{equation*}
where $L_k^\alpha(x)$ are the Laguerre polynomials. 
The $l_k^\alpha(x)$ are eigenfunctions with eigenvalues $\lambda_{\alpha,k}= k + (\alpha+1)/2$ of the differential operator
\begin{equation}
\label{laguerre}
L= -\left( x \frac{d^2}{dx^2} + (\alpha+1) \frac{d}{dx} - \frac{x}{4} \right)
\end{equation}
and are an orthonormal basis in $L^2(\R_{+},x^\alpha dx)$. Therefore, for $\gamma<p(\alpha+1)-1$ we can associate
to any $f \in L^p(\mathbb{R}_+, x^\gamma \, dx)$  its Laguerre series:
\begin{equation*} f(x) \sim \sum_{k=0}^\infty  a_{\alpha,k}(f) l_k^\alpha(x), \quad   a_{\alpha,k}(f)= \int_0^\infty f(x) l_k^\alpha(x) x^\alpha dx \label{Laguerre-series}
\end{equation*}
and, given a bounded sequence $\{m_k\}$, we can define a multiplier operator by
\begin{equation}
 M_{\alpha,m} f(x) \sim \sum_{k=0}^\infty  a_{\alpha,k}(f) m_k l_k^\alpha(x).
\label{multiplier-operator}
 \end{equation}

The main example of the kind of multipliers we are interested in is the Laguerre fractional integral,  introduced
by G. Gasper, K. Stempak and W. Trebels in \cite{GST} as an analogue  in the Laguerre setting of the classical fractional
integral of Fourier analysis, and  given by
\begin{equation*}
I_\sigma f(x) \sim \sum_{k=0}^\infty (k+1)^{-\sigma} a_{\alpha,k} l_k^\alpha(x).
\end{equation*}

In \cite{GST} the aforementioned authors obtained weighted estimates for this operator that were later improved by G. Gasper and W. Trebels in \cite{GT} using a completely different proof. In this work we recover some of the ideas of the original method of \cite{GST}, but simplifying the proof in many technical details and extending it to obtain a better range of exponents that, in particular, give the same result of \cite{GT} for the Laguerre fractional integral. Moreover, we show that our proof applies to a wide class of multipliers, namely multipliers arising from a Laplace-Stieltjes transform, which are of the form \eqref{multiplier-operator} with  $m_k=m(k)$ given by the Laplace-Stieljtes transform  of some real-valued function 
$\Psi(t)$, that is,
\begin{equation}
m(s) = \mathfrak{L}\Psi(s) := \int_0^\infty e^{-st} d\Psi(t).
 \label{Laplace.transform} 
 \end{equation}

We will assume that $\Psi$ is  of bounded variation in $\R_+$,  so that the Laplace transform converges absolutely in the half plane $\hbox{Re}(s)\geq 0$  (see \cite[Chapter 2]{Widder}) and the definition of the operator $M_{\alpha,m}$ makes sense.

Multipliers of this kind are quite natural to consider and, indeed, a slightly different definition is given by E. M. Stein in \cite{Stein} and was previously used in the unweighted setting by E. Sasso in \cite{S}. More recently, B. Wr\'obel \cite{W} has obtained weighted $L^p$ estimates for the both the kind of multipliers considered in  \cite{Stein} and the ones considered here when $\alpha \in \{-\frac12\} \cup [\frac12,\infty)$, by proving that they are Calder\'on-Zygmund operators  (see Section 4 below for a precise comparison of results). Also, let us mention that T. Mart\'inez has considered multipliers of Laplace transform type for ultraspherical expansions  in \cite{Martinez}. 

 Other kind of multipliers for Laguerre expansions have also been considered, see, for instance, \cite{GST, Stempak-Trebels, Thangavelu} where boundedness criteria are given in terms of difference operators. In our case, we will only require minimal assumptions on the function $\Psi$,   which are more natural in our context, and easier to verify in the case of the Laguerre fractional integral and in other examples that we will consider later. Indeed, the main theorem we will prove for multipliers for Laguerre expansions reads as follows:
 \begin{theorem}
Assume that  $\alpha>-1$ and that $M_{\alpha,m}$ is a multiplier of Laplace transform type for Laguerre expansions, given by \eqref{multiplier-operator}  and  \eqref{Laplace.transform}, such that:
\begin{enumerate}
\item[(H1)] \begin{equation*}\int_0^\infty |d\Psi|(t)< +\infty; \end{equation*}
\item[(H2)] there exist $\delta>0$,  $0 < \sigma < \alpha+1$, and $C>0$ such that
$$ |\Psi(t)| \leq C t^{\sigma} \quad \hbox{for} \; 0 \le t \leq \delta .$$
\end{enumerate}

Then $M_{\alpha,m}$ can be extended to a bounded operator such that
$$ \| M_{\alpha,m} f \|_{L^q(\mathbb{R}_+, x^{(\alpha-bq)})} \leq C \| f \|_{L^p(\mathbb{R}_+, x^{(\alpha+ap)})} $$
provided that the following conditions hold:
\begin{equation*}
1 < p \leq q < \infty \quad , \quad a < \frac{\alpha+1}{p^\prime}\quad , \quad  b < \frac{\alpha+1}{q}
\end{equation*} 
and 
\begin{equation*}
\label{cond19}
 \left( \frac{1}{q} - \frac{1}{p} \right) \left(\alpha+\frac12\right) \le a+b  \le \left(\frac{1}{q}-\frac{1}{p}\right)(\alpha+1) + \sigma.
\end{equation*} 
\label{main-result}
\end{theorem}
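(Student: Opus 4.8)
The plan is to realize $M_{\alpha,m}$ as a superposition, against $d\Psi$, of the operators $T_t$ with symbol $\{e^{-kt}\}_{k}$, to estimate the resulting integral kernel, and to feed that bound into a weighted Hardy--Littlewood--Sobolev inequality. First I would use the Laplace representation of the symbol to write, at least formally,
\[
M_{\alpha,m}f(x)=\sum_{k\ge 0} a_{\alpha,k}(f)\Big(\int_0^\infty e^{-kt}\,d\Psi(t)\Big)l_k^\alpha(x)
=\int_0^\infty T_t f(x)\,d\Psi(t),
\]
where $T_t$ is the multiplier operator with symbol $\{e^{-kt}\}_{k}$; since $|e^{-kt}|\le 1$ and $\int_0^\infty|d\Psi|<\infty$ by (H1), the interchange of sum and integral is legitimate and $T_t$ is uniformly bounded. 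The operator $T_t$ has the explicit kernel
\[
K_t(x,y)=\sum_{k\ge 0}e^{-kt}\,l_k^\alpha(x)\,l_k^\alpha(y),
\]
which the Hardy--Hille formula sums in closed form in terms of a modified Bessel function $I_\alpha$ with argument $z=2\sqrt{xy}\,e^{-t/2}/(1-e^{-t})$ and Gaussian-type exponential factors. Consequently the kernel of $M_{\alpha,m}$ is $K(x,y)=\int_0^\infty K_t(x,y)\,d\Psi(t)$, and after the substitution $g=x^{a}f$ the theorem becomes equivalent to the weighted estimate
\[
\Big\|\int_0^\infty x^{-b}K(x,y)y^{-a}\,g(y)\,y^\alpha\,dy\Big\|_{L^q(\R_+,\,x^\alpha dx)}\le C\,\|g\|_{L^p(\R_+,\,x^\alpha dx)}.
\]

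Next I would estimate $K(x,y)$ by splitting $\int_0^\infty d\Psi=\int_0^\delta+\int_\delta^\infty$. On $[\delta,\infty)$ the kernel $K_t$ is smooth, bounded, and exponentially decaying (as $t\to\infty$ it tends to the rank-one limit $l_0^\alpha(x)l_0^\alpha(y)$), so this part is controlled by (H1) alone. On $[0,\delta]$, where $K_t$ develops its on-diagonal singularity, I would integrate by parts to transfer the Stieltjes measure onto $\Psi$,
\[
\int_0^\delta K_t(x,y)\,d\Psi(t)=\big[K_t(x,y)\Psi(t)\big]_0^\delta-\int_0^\delta \partial_t K_t(x,y)\,\Psi(t)\,dt,
\]
and then invoke (H2), $|\Psi(t)|\le C t^{\sigma}$, together with the asymptotics of $I_\alpha$ (small argument $z^\alpha$, large argument $z^{-1/2}e^{z}$) to bound $K_t$ and $\partial_t K_t$ uniformly. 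Here the order $\sigma$ enters decisively: the factor $t^{\sigma}$ compensates the small-$t$ singularity of $\partial_t K_t$ and, near the diagonal, produces a Riesz-potential-type kernel of the appropriate order, while the exponential factors yield fast decay off the diagonal; the hypothesis $0<\sigma<\alpha+1$ is exactly what makes the $t$-integral converge.

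With a pointwise bound for $K(x,y)$ in hand, I would separate the local part (near the diagonal $x\sim y$) from the global part. The local part is comparable to a Euclidean fractional integral, so a Stein--Weiss inequality in the measure $x^\alpha dx$ gives the bound precisely under $1<p\le q<\infty$, $a<(\alpha+1)/p'$, $b<(\alpha+1)/q$, and the two-sided condition on $a+b$; the global part is controlled by Hardy-type inequalities, which impose no stronger restriction. The main obstacle I anticipate lies in the analysis on $[0,\delta]$: obtaining bounds for $K_t$ and $\partial_t K_t$ that are uniform across all regimes of the Bessel argument $z$, and matching the resulting on-diagonal singularity exactly to the endpoint exponents so that the upper constraint $a+b\le(\tfrac1q-\tfrac1p)(\alpha+1)+\sigma$ comes out sharp, is the delicate technical heart of the argument.
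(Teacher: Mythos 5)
Your overall strategy --- writing $M_{\alpha,m}=\int_0^\infty T_t\,d\Psi(t)$ with $T_t$ the multiplier with symbol $e^{-kt}$, summing the kernel of $T_t$ by the Hardy--Hille formula, integrating by parts on $[0,\delta]$ to exploit (H2), and handling $[\delta,\infty)$ by (H1) alone --- is a legitimate alternative skeleton, close in spirit to the kernel-based treatments of Nowak--Stempak and Wr\'obel rather than to this paper's route. (The paper instead realizes $M_{\alpha,m}$ as a twisted generalized convolution, bounds its \emph{one-variable} kernel by $Cx^{\sigma-\alpha-1}$ via the linear generating function $Z_{\alpha,x}$ with Abel regularization, applies a weak-type Young inequality on the multiplicative group $(\mathbb{R}_+,\cdot,\frac{dx}{x})$, and uses a transplantation theorem to reach $-1<\alpha<0$.) Your integration by parts against $\Psi$ is exactly the mechanism of the paper's Lemma 2.2, and working with the two-variable heat kernel would even spare you the transplantation step, since Hardy--Hille is valid for all $\alpha>-1$.

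However, there is a genuine gap at your last step. You propose to majorize $K(x,y)$ by a Riesz-potential-type kernel and then invoke a Stein--Weiss inequality in the measure $x^\alpha dx$, claiming this yields the two-sided condition on $a+b$. It cannot. First, a single Stein--Weiss-type inequality carries an \emph{exact} scaling relation, not an interval of admissible $a+b$; to get an interval you must first reduce to the endpoint $a+b=\left(\frac1q-\frac1p\right)(\alpha+1)+\sigma$, using that (H2) with exponent $\sigma$ implies (H2) with any smaller exponent (as the paper does). Second, and more seriously, the classical Stein--Weiss theorem (in $\mathbb{R}^n$ or in a homogeneous-type space of dimension $\alpha+1$) carries the necessary condition $a+b\ge 0$, because a translation-invariant diagonal singularity rules out negative $a+b$; but the theorem you must prove allows $a+b$ as low as $\left(\frac1q-\frac1p\right)\left(\alpha+\frac12\right)$, which is strictly negative whenever $p<q$ and $\alpha>-\frac12$, and this extra range is precisely one of the paper's main points (it is what matches the Gasper--Trebels result). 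The Laguerre kernel beats Stein--Weiss because it is not merely a function of a distance: the sharp Hardy--Hille estimates carry product factors of the form $(xy)^{-\text{power}}$ (equivalently, the angular averaging $(\sin\theta)^{2\alpha}\,d\theta$ hidden in the generalized translation), which weaken the diagonal singularity away from the origin. Once you replace the kernel by a pure fractional-integral majorant in the distance variable, that structure --- and with it the range $a+b<0$ --- is irretrievably lost. To complete your argument you would need to retain the product-form kernel bound and prove the correspondingly stronger weighted inequality; this is exactly what the paper's Young-inequality argument on $(\mathbb{R}_+,\cdot)$ accomplishes, via the estimate for $y^{2\sigma}I_{2(1+\alpha-\sigma),\alpha-\frac12}\left(\frac{x}{y}\right)$ in weak Lebesgue norms with respect to $\frac{dx}{x}$.
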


Besides the system $\{l_k^\alpha\}_{k\ge 0}$, other families of Laguerre functions have been considered in the literature, and using an idea due to I. Abu-Falah, R. A. Mac\'ias, C. Segovia and J. L. Torrea \cite{AMST} we will show that analogues of Theorem \ref{main-result} hold for those families with appropriate changes in the exponents (see Section 3 for the precise statement of results). 

Finally, the well-known connection between Laguerre and Hermite expansions will allow us to extend the above result to an analogous result for Laplace type multipliers for Hermite expansions. To make this precise, recall that, given $f \in L^2(\mathbb{R})$, we can consider its Hermite series expansion
\begin{equation*}
f \sim \sum_{k=0}^\infty c_k(f) h_k ,  \quad   c_{k}(f)= \int_{-\infty}^\infty f(x) h_k(x) dx
 \label{Hermite-series}. 
\end{equation*}
where $h_k$ are the Hermite functions given by 
\begin{equation*}
h_k(x)= \frac{(-1)^k}{(2^k k! \pi^{1/2})^{1/2}} H_k(x) e^{-\frac{x^2}{2}}, 
\end{equation*}
which are the normalized eigenfunctions of the Harmonic oscillator operator 
$$H=-\frac{d^2}{dx^2} +  |x|^2.
$$

As before, given a bounded sequence $\{m_k\}$ we can define a multiplier operator by
\begin{equation}
\label{hermite-multiplier}
M_{H,m} f \sim \sum_{k=0}^\infty c_k(f) m_k h_k
\end{equation}
and we say that it is a Laplace transform type multiplier if equation \eqref{Laplace.transform} holds. Then, we have the following analogue of Theorem \ref{main-result}, which, in the case of the Hermite fractional integral (that is, for $m_k= (2k+1)^{-\sigma}$), gives the same result of \cite[Theorem 2.5]{Nowak-Stempak} in the one-dimensional case:

\begin{theorem}
\label{teorema-hermite}
Assume that $M_{H,m}$ is a multiplier of Laplace transform type for Hermite expansions, given by \eqref{hermite-multiplier}  and  \eqref{Laplace.transform}, such that:
\begin{enumerate}
\item[(H1h)] $$ \int_0^\infty |d\Psi|(t) < +\infty;$$
\item[(H2h)] there exist $\delta>0$,  $0 < \sigma < \frac12$,  and $C>0$ such that
$$ |\Psi(t)| \leq C t^{\sigma} \quad \hbox{for} \; 0 \le t \leq \delta.$$
\end{enumerate}

Then $M_{H,m}$ can be extended to a bounded operator such that
$$ \| M_{H,m} f \|_{L^q(\mathbb{R}, x^{-bq})} \leq C \| f \|_{L^p(\mathbb{R}, x^{ap})} $$
provided that the following conditions hold:
\begin{equation*}
1 < p \leq q < \infty \quad , \quad a<\frac{1}{p'} \quad , \quad b<\frac{1}{q}
\end{equation*}
and
\begin{equation*}
0\le a+b \le \frac{1}{q} -\frac{1}{p}+ 2\sigma.
\label{escalah}
\end{equation*}

\end{theorem}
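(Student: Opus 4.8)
The plan is to deduce Theorem~\ref{teorema-hermite} from the Laguerre result, Theorem~\ref{main-result}, by exploiting the classical correspondence between Hermite functions on $\R$ and Laguerre functions of parameter $\alpha=\pm\frac12$ on $\R_+$. First I would split $f=f_e+f_o$ into its even and odd parts; since the power weights $|x|^{ap}$ and $|x|^{-bq}$ are even, both projections are bounded (with constant $1$) on the relevant weighted spaces, and $M_{H,m}$ preserves parity because $h_{2k}$ is even and $h_{2k+1}$ is odd. Hence it suffices to estimate $M_{H,m}f_e$ and $M_{H,m}f_o$ separately. Using the identities $h_{2k}(x)=(-1)^k\,l_k^{-1/2}(x^2)$ and $h_{2k+1}(x)=(-1)^k\,x\,l_k^{1/2}(x^2)$ (up to sign, as one checks by comparing $L^2$ normalizations), together with the substitution $y=x^2$, I would encode the even part by $F_e(y)=f_e(\sqrt y)$ and the odd part by $F_o(y)=y^{-1/2}f_o(\sqrt y)$. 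A term-by-term computation then shows that $M_{H,m}f_e$ corresponds to a Laguerre multiplier $M_{-1/2,\tilde m}$ acting on $F_e$, and $M_{H,m}f_o$ to $M_{1/2,\tilde m'}$ acting on $F_o$, where $\tilde m_k=m(4k+1)$ and $\tilde m'_k=m(4k+3)$, the eigenvalue of $H$ on $h_j$ being $2j+1=4\lambda_{\alpha,k}$ with $\alpha=\mp\frac12$.

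The next step is to check that these transplanted sequences are again of Laplace transform type satisfying (H1)--(H2). Writing $m(4k+c)=\int_0^\infty e^{-k(4t)}e^{-ct}\,d\Psi(t)$ and substituting $u=4t$ exhibits $\tilde m_k$ (resp. $\tilde m'_k$) as $\mathfrak{L}\tilde\Psi(k)$ with $d\tilde\Psi(u)=e^{-cu/4}\,d\Psi(u/4)$ for $c=1$ (resp. $c=3$). The damping factor $e^{-cu/4}\le 1$ immediately gives $\int_0^\infty|d\tilde\Psi|\le\int_0^\infty|d\Psi|<\infty$, so (H1) survives; and an integration by parts, using $\Psi(0)=0$ and the bound $|\Psi(t)|\le Ct^\sigma$, shows $|\tilde\Psi(u)|\le C'u^\sigma$ near the origin, so (H2) holds with the \emph{same} exponent $\sigma$. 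The crucial arithmetic point is that the even part forces the Laguerre parameter $\alpha=-\frac12$, for which the admissibility range $0<\sigma<\alpha+1$ of Theorem~\ref{main-result} becomes exactly $0<\sigma<\frac12$, matching hypothesis (H2h); the odd part, with $\alpha=\frac12$, is then automatically in range.

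Finally I would track the weights through $y=x^2$. For the even part one finds $\int_{\R}|f_e|^p|x|^{ap}\,dx=\int_0^\infty|F_e(y)|^p y^{-1/2+a'p}\,dy$ with $a'=a/2$, and likewise $b'=b/2$ on the image side, so Theorem~\ref{main-result} with $\alpha=-\frac12$ applies precisely when $a<1/p'$, $b<1/q$, $0\le a+b$ and $a+b\le(\frac1q-\frac1p)+2\sigma$. For the odd part the extra factor $x=\sqrt y$ shifts the exponents to $a'=\frac a2+\frac12-\frac1p$ and $b'=\frac b2-\frac12+\frac1q$, and a short computation shows that the hypotheses of Theorem~\ref{main-result} with $\alpha=\frac12$ reduce to the \emph{same} four conditions. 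Applying Theorem~\ref{main-result} to each piece and summing yields the claim. I expect the main obstacle to be the bookkeeping of this last step --- verifying that the even and odd substitutions, which carry different parameters $\alpha=\mp\frac12$ and different weight shifts, nonetheless collapse to one and the same admissible region in the $(a,b,\sigma)$ variables --- and, more delicately, justifying the formal term-by-term manipulations at the level of the extended operators rather than merely on $L^2$, i.e. controlling convergence on the weighted $L^p$--$L^q$ spaces.
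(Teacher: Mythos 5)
Your proposal is correct and follows essentially the same route as the paper's proof: the even/odd decomposition, the substitution $y=x^2$ reducing $M_{H,m}$ to Laguerre multipliers with $\alpha=\mp\frac12$ (with $\sigma<\frac12$ exactly matching the admissible range $0<\sigma<\alpha+1$ for $\alpha=-\frac12$), the verification that the transplanted sequences are again Laplace--Stieltjes transforms satisfying (H1)--(H2) with the same $\sigma$ (integration by parts for the odd part), and the same weight bookkeeping showing both pieces collapse to the region $a<\frac{1}{p'}$, $b<\frac{1}{q}$, $0\le a+b\le \frac{1}{q}-\frac{1}{p}+2\sigma$. The only deviation is notational: the paper indexes the multiplier by $m_k=m(k)$ rather than by the eigenvalue $2k+1$, so its transplanted sequences are $m(2k)$ and $m(2k+1)$ (with $\Psi_0(u)=\tfrac12\Psi(u/2)$ and $\Psi_1(u)=\tfrac12\int_0^{u/2}e^{-\tau}\,d\Psi(\tau)$) instead of your $m(4k+1)$ and $m(4k+3)$, which is immaterial since your dilation-plus-exponential-damping check of (H1)--(H2) is exactly the paper's argument.
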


The remainder of this paper is organized as follows.  In Section 2 we prove Theorem \ref{main-result}. For the case $\alpha \ge 0$ the proof relies on the representation of the operator as a twisted generalized convolution, already used in \cite{GST} for the Laguerre fractional integral. However, instead of using the method of that paper to obtain weighted bounds, we give a simpler proof based on the use of Young's inequality in the multiplicative group $(\mathbb{R}_+, \cdot)$, which allows us to obtain a wider range of exponents. Moreover, we obtain an estimate for the convolution kernel which simplifies and generalizes Lemma 2.1 from \cite{GST}.  For the case $-1 < \alpha < 0$ the result is obtained from the previous case by means of a weighted transplantation theorem from \cite{Garrigos}. A similar idea was used by Y. Kanjin and E. Sato in \cite{KS} to prove unweighted estimates for the Laguerre fractional integral using a transplantation theorem from \cite{K}.
In Section 3 we obtain the analogues of Theorem  \ref{main-result} for other Laguerre systems using an idea from \cite{AMST}.  In Section 4 we exploit the relation between Laguerre and Hermite expansions to derive Theorem \ref{teorema-hermite} from Theorem \ref{main-result}. 
 Finally, in Section 5 we present some examples of operators covered by  the two main theorems and make some further comments.

\section{Proof of the theorem in the Laguerre case}

In this section we prove Theorem \ref{main-result}. We will divide the proof in three steps:
\begin{enumerate}
\item We write the operator as a twisted generalized convolution and obtain  the estimate for the convolution kernel when $\alpha \ge 0$. This part of the  proof follows essentially the ideas of \cite{GST}, but in the more general setting of multipliers of Laplace transform type. In particular, we provide an easier proof of the analogue of \cite[Lemma  2.1]{GST} in this setting (see Lemma \ref{lemma-g} below).
\item We complete the proof of the theorem in the case $\alpha \ge 0$ by proving weighted estimates for the generalized euclidean convolution.
\item We extend the results to the case $-1<\alpha < 0$ using the case $\alpha \ge 0$ and a weighted transplantation theorem from \cite{Garrigos} (Lemma \ref{lema-garrigos} below).
\end{enumerate}

\subsection{Step 1: representing the multiplier operator as a twisted  generalized convolution when $\alpha \ge 0$}

Following \cite{Mc,A} we define the twisted generalized convolution of $F$ and $G$ by
$$ F \times G := \int_0^\infty \tau_x F(y) \, G(y) \, y^{2\alpha+1} \, dy$$
where the twisted translation
operator is defined by
$$ \tau_x F(y)= \frac{\Gamma(\alpha+1)}{\pi^{1/2} \Gamma(\alpha+1/2)}
\int_0^\pi F((x,y)_\theta) \mathcal{J}_{\alpha-1/2}(xy \sin \theta)
(\sin \theta )^{2\alpha} \; d\theta $$
with
$$\mathcal{J}_\beta(x)= \Gamma(\beta+1) J_\beta(x)/(x/2)^\beta $$
 $J_\beta(x)$ being the Bessel function of order $\beta$ and
$$ (x,y)_\theta= (x^2 + y^2 - 2xy \cos \theta)^{1/2}.$$
Then,  we have (formally) that
\begin{equation}
\label{malfa}
M_{\alpha,m} f(x^2)= F \times G
\end{equation}
where
$$ F(y)=f(y^2)\quad , \quad G(y) = g(y^2) $$
and
\begin{equation} g(x) \sim \frac1{\Gamma(\alpha+1)}
\sum_{k=0}^\infty m_k L_k^\alpha(x) e^{-\frac{x}{2}}.
\label{series-g}
\end{equation}
Recalling that $|\mathcal{J}_\beta(x)| \leq C_\beta$  if
$\beta \geq -\frac12$, 
we have that:
\begin{equation}
 | F \times G | \le C (|F| \star |G|)  \label{convolution-bound} 
 \end{equation}
where $\star$ denotes the generalized Euclidean convolution which is defined by
\begin{equation}
\label{gen-eucl}
F \star G(x) := \int_0^\infty \tau^E_x F(y) \, G(y)  \, y^{2\alpha+1} \, dy
\end{equation}
with
\begin{equation}
\label{gen-trans}
 \tau^E_x F(y):= \frac{\Gamma(\alpha+1)}{\pi^{1/2} \Gamma(\alpha+1/2)}
\int_0^\pi F((x,y)_\theta) (\sin \theta )^{2\alpha} \; d\theta. 
\end{equation}

As a consequence of \eqref{malfa} and \eqref{convolution-bound}, the operator $M_{\alpha,m}$ is pointwise bounded by a generalized euclidean convolution with the kernel $G$ (with respect to the measure $x^{2\alpha+1} \, dx$). Therefore, we need to obtain an appropriate estimate for $G(x)=g(x^2)$, that essentially is:
$$ |g(x)| \leq C x^{\sigma-\alpha-1} \; \hbox{for} \; \alpha \geq 0 \; \hbox{and} \; 0 < \sigma < \alpha +1 $$
(see Lemma \ref{lemma-g} below for a precise statement).
 
This generalizes the result given in \cite[Lemma  2.1]{GST} but, while in that paper the proof of the corresponding estimate is based on delicate pointwise estimates for the Laguerre functions, our proof is  based on the following generating function for the Laguerre polynomials (see, for instance, \cite{Thangavelu}):
\begin{equation} 
\label{generating-function}
\sum_{k=0}^\infty L_k^\alpha(x) w^k = (1-w)^{-\alpha-1} e^{-\frac{xw}{1-w}} :=  Z_{\alpha,x}(w)  \quad (|w|<1).
\end{equation}

To explain our ideas, we point out that if the series in \eqref{series-g} were convergent (this need not be the case) we would have:
\begin{align*} g(x) &= \frac1{\Gamma(\alpha+1)}
\sum_{k=0}^\infty m_k L_k^\alpha(x) e^{-\frac{x}{2}}
\\ & = \frac1{\Gamma(\alpha+1)}
\sum_{k=0}^\infty \left( \int_0^\infty e^{-kt} d\Psi(t) \right)  L_k^\alpha(x) e^{-\frac{x}{2}}
\\ & =  \frac1{\Gamma(\alpha+1)} e^{-\frac{x}{2}}
\int_0^\infty  Z_{\alpha,x}(e^{-t})  \; d\Psi(t).
\end{align*}

The main advantage of this formula is that it shields a rather explicit expression for $g$ in which, thanks to \eqref{generating-function}, the Laguerre polynomials do not appear.

However, in general it is not clear if the series in  \eqref{series-g} is convergent (not even in the special
case of the Laguerre fractional integral $m(t)=t^{\sigma-1}$). Moreover, the integration of the series in $Z_{\alpha,x}(w)$
is difficult to justify since it is not uniformly convergent in the interval $[0,1]$ (because $Z_{\alpha,x}(w)$ is not analytical for $w=1$).

Nevertheless, we will see that the formal manipulations above can be given a rigorous meaning if we agree in understanding the convergence  of the series in $\eqref{series-g}$ in the Abel sense. For this purpose, we introduce a regularization parameter $\rho \in (0,1)$, we consider the regularized function
\begin{equation}
 g_{\rho}(x) = \frac1{\Gamma(\alpha+1)} \sum_{k=0}^\infty m_k \rho^k  L_k^\alpha(x) e^{-\frac{x}{2}}
\label{series-g-rho} 
\end{equation}
and recall that the series in \eqref{series-g} is summable in Abel sense to the limit $g(x)$ if there exists the limit
\begin{equation*}
 g(x) = \lim_{\rho \to 1} g_{\rho}(x). 
  \end{equation*}
 
 With this definition in mind, we can give a rigorous meaning to the heuristic idea described above. More precisely, we will prove the following:

\begin{lemma} \label{lemma-g}
Let $ g_{\rho}$ be defined by \eqref{series-g-rho}. Then:

(1) For $0<\rho<1$ the series \eqref{series-g-rho} converges absolutely.

(2) The following representation formula holds:
\begin{equation}
\label{rep-grho}
g_{\rho}(x) =  \frac{1}{\Gamma(\alpha+1)} \int_0^\infty Z_{\alpha,x}(\rho e^{-t}) \; d\Psi(t).
\end{equation}

(3) If we define $g(x)$ by setting $\rho=1$ in this representation formula,
$g(x)$ is well defined and the series \eqref{series-g} converges to $g(x)$ in the Abel sense.

(4) If $\alpha>0$, $0 <\rho_0  < \rho  \leq 1$  and $0 < \sigma < \alpha +1$, then
$$ |g_\rho (x)| \leq C x^{\sigma-\alpha-1}, $$
with a constant $C=C(\alpha,\sigma)$ independent of $\rho$.
\end{lemma}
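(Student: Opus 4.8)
The plan is to prove the four assertions in order, the first three being essentially bookkeeping and the fourth, the pointwise estimate, being the crux. For (1), hypothesis (H1) gives $|m_k|=\big|\int_0^\infty e^{-kt}\,d\Psi(t)\big|\le\int_0^\infty|d\Psi|(t)=:V<\infty$ for all $k$, so $\{m_k\}$ is bounded; and since for fixed $x$ the generating function \eqref{generating-function} is analytic in $\{|w|<1\}$, its Taylor coefficients obey $\limsup_k|L_k^\alpha(x)|^{1/k}\le1$, whence $\sum_k|m_k|\,|L_k^\alpha(x)|\,\rho^k\le V\sum_k|L_k^\alpha(x)|\rho^k<\infty$ for every $\rho\in(0,1)$. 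For (2) I would insert $m_k=\int_0^\infty e^{-kt}\,d\Psi(t)$ into \eqref{series-g-rho} and interchange the sum with the Stieltjes integral; this is justified because for every $t\ge0$ one has $\rho e^{-t}\le\rho<1$, so the partial sums of $\sum_kL_k^\alpha(x)(\rho e^{-t})^k$ are majorized uniformly in $t$ by the convergent series $\sum_k|L_k^\alpha(x)|\rho^k$, while $|d\Psi|$ is a finite measure. Summing by means of \eqref{generating-function} then yields \eqref{rep-grho}.

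For (3) I would set $\rho=1$ in \eqref{rep-grho} and verify absolute convergence for fixed $x>0$. The only delicate endpoint is $t\to0^+$, where $(1-e^{-t})^{-\alpha-1}\sim t^{-\alpha-1}$ is unbounded; but the accompanying exponential $\exp\!\big(-x\,e^{-t}/(1-e^{-t})\big)$ behaves like $e^{-x/t}$ and overwhelms any power of $t$, so the integrand is in fact bounded (and tends to $0$) there, while as $t\to\infty$ it tends to $Z_{\alpha,x}(0)=1$; finiteness of $|d\Psi|$ then makes $g(x)$ well defined. The Abel identity $g(x)=\lim_{\rho\to1^-}g_\rho(x)$ follows from dominated convergence, using $Z_{\alpha,x}(\rho e^{-t})\to Z_{\alpha,x}(e^{-t})$ pointwise together with a $\rho$-independent integrable majorant supplied by the estimate of part (4).

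The heart of the lemma is (4). Here I would write $w=\rho e^{-t}$ and $v=w/(1-w)$, so that the kernel becomes $Z_{\alpha,x}(\rho e^{-t})=(1+v)^{\alpha+1}e^{-xv}$, singular only as $t\to0^+$ (that is, $v\to\infty$). A direct absolute-value estimate $|g_\rho(x)|\le\frac{V}{\Gamma(\alpha+1)}\sup_t|Z_{\alpha,x}(\rho e^{-t})|$ gives only the weaker exponent $-\alpha-1$; to reach $\sigma-\alpha-1$ one must exploit the smallness of $\Psi$ near $0$ coming from (H2). I would therefore integrate by parts in the Stieltjes integral \eqref{rep-grho}, throwing the derivative onto the kernel: the boundary term at $t=0$ vanishes since $\Psi(0^+)=0$, the boundary term at infinity is $\Psi(\infty)Z_{\alpha,x}(0)$ and is controlled by the Gaussian prefactor $e^{-x/2}$ present in $g_\rho$ through \eqref{series-g-rho}, and the remaining integral $\int_0^\infty\Psi(t)\frac{d}{dt}Z_{\alpha,x}(\rho e^{-t})\,dt$, after passing to the variable $v$ (with $dt=-\frac{dv}{v(1+v)}$) and inserting $|\Psi(t)|\le Ct^\sigma\lesssim Cv^{-\sigma}$ on the singular range (where $t\sim1/v$), reduces to an integral of the shape $\int v^{\alpha-\sigma+1}e^{-xv}\,dv$. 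The rescaling $u=xv$ identifies this with a Gamma-type integral of size $\sim x^{\sigma-\alpha-1}$, convergent at $v=0$ precisely because $\sigma<\alpha+1$; on the range $t\ge\delta$ one uses only $|\Psi|\le V$, and the resulting bounded contribution is absorbed into $Cx^{\sigma-\alpha-1}$ by the fast decay of $e^{-x/2}$.

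The main obstacle is exactly this last estimate, and within it two points demand care: extracting the gain of $x^\sigma$ over the crude bound — which is what forces the integration by parts and the use of (H2) rather than a plain absolute-value estimate — and keeping every constant uniform in $\rho\in(\rho_0,1]$. The uniformity I would secure by observing that $\rho e^{-t}\le e^{-t}$, so the $\rho=1$ kernel dominates the quantities being estimated and the change of variables can be carried out with $\rho$-independent bounds, the endpoint $\rho=1$ then being reached by the same dominated passage used in part (3).
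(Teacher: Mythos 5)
Your outline follows the paper's proof in all essentials: (1) boundedness of $m_k$ from (H1) plus at-most-geometric growth of $L_k^\alpha(x)$ (your Cauchy--Hadamard argument is precisely the observation recorded in the paper's footnote); (2) interchange of the sum and the Stieltjes integral via uniform convergence on $[0,\rho]$ and (H1); (3) dominated convergence; (4) integration by parts against $\Psi$, using $\Psi(0)=0$ and (H2), a change of variables $v=w/(1-w)$ (the paper uses $u=\tfrac12+\tfrac{w}{1-w}$, which in addition absorbs the prefactor $e^{-x/2}$ into $e^{-ux}$), and reduction to a Gamma integral of size $x^{\sigma-\alpha-1}$, convergent precisely because $\sigma<\alpha+1$. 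Two cosmetic slips: in (3) a constant majorant suffices, so there is no need to invoke part (4); and $\int_0^\infty v^{\alpha-\sigma+1}e^{-xv}\,dv=x^{\sigma-\alpha-2}\,\Gamma(\alpha-\sigma+2)$, not $\sim x^{\sigma-\alpha-1}$ --- the missing power of $x$ is restored by the factor $x$ in $\frac{d}{dv}\bigl[(1+v)^{\alpha+1}e^{-xv}\bigr]$, so your final exponent is nevertheless correct.

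The one step that would fail as written is your justification of uniformity in $\rho$: the claim that $\rho e^{-t}\le e^{-t}$ implies that ``the $\rho=1$ kernel dominates'' is false, because $w\mapsto Z_{\alpha,x}(w)$ is not monotone on $[0,1)$; when $x<\alpha+1$ it increases up to $w=1-x/(\alpha+1)$ and then decreases to $0$. For instance, with $\alpha=x=1$ one has $Z_{1,1}(0.5)=4e^{-1}\approx 1.47$ while $Z_{1,1}(0.9)=100e^{-9}\approx 0.012$, so $Z_{1,1}(\rho\cdot 0.9)>Z_{1,1}(0.9)$ for $\rho=5/9$. Relatedly, your argument never uses the hypothesis $\rho>\rho_0$, which is genuinely needed: on the singular range $t\le\delta$ the variable $v$ runs over $\bigl[\rho e^{-\delta}/(1-\rho e^{-\delta}),\,\rho/(1-\rho)\bigr]$, and the step $|\Psi(t)|\le Ct^\sigma\le Cv^{-\sigma}\le C'(1+v)^{-\sigma}$, needed to land on a convergent Gamma integral when $\sigma\ge 1$, requires $v$ to stay bounded away from $0$, with a constant depending only on $\delta$ and $\rho_0$. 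The correct mechanism --- which is the paper's, and which the rest of your outline already contains --- is that $t=\log\bigl(\rho(1+v)/v\bigr)\le\log(1+1/v)\le 1/v$ for every $\rho\le 1$, that the $v$-derivative of the kernel $(1+v)^{\alpha+1}e^{-xv}$ does not depend on $\rho$ at all, and that the restriction $v\ge \rho_0e^{-\delta}/(1-\rho_0e^{-\delta})>0$ yields constants depending only on $\alpha,\sigma,\delta,\rho_0$; no pointwise domination by the $\rho=1$ kernel is needed, nor is it available.
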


\begin{proof} 

(1)  Observe first  that hypothesis $(H1)$ implies that $(m_k)$ is a bounded sequence. Indeed, 
$$ 
|m_k|  \leq \int_0^\infty e^{-kt} |d\phi|(t) \leq \int_0^\infty |d\phi|(t) = C < +\infty.
$$

Now recall that (\cite[Lemma 1.5.3]{Thangavelu}), if $\nu=\nu(k)= 4k + 2\alpha +2$, 
$$
 |l_k^\alpha(x)| \leq C (x\nu)^{-\frac14} \quad \hbox{if } \frac{1}{\nu} \leq
x \leq \frac{\nu}{2}.
$$
 Therefore, if we fix $x$, for $k \geq k_0$, $x$ is in the region where
this estimate holds  (since $\nu \to +\infty$ when $k \to +\infty$), and from Stirling's formula we deduce that
$$
 \frac{k!}{\Gamma(k+\alpha+1)} =  \frac{\Gamma(k+1)}{\Gamma(k+\alpha+1)} = O(k^{-\alpha}). 
 $$
 
Then we have the following estimate for the terms of the series in \eqref{series-g-rho}
$$
 |m_k \rho^k L^\alpha_k(x)| e^{-\frac{x}{2}} \leq C(x)
\rho^k k^{-\sigma} \; \hbox{for} \; k \geq k_0, 
$$
and, since $\rho<1$, this implies that the series converges absolutely.\footnote{K. Stempak has observed that this result can be also justified by observing
that, for fixed $x$, $L^\alpha_k(x)$ has at most polynomial growth with $k\to
\infty$ (see, for instance, (7.6.9) and (7.6.10) in \cite{Sz}). Hence, the polynomial
growth of $L^\alpha_k(x)$ versus the exponential decay of $\rho^k$, with $m_k$
disregarded as a bounded sequence, produce an absolutely convergent series.}

\medskip

(2)  First, observe that $Z_{\alpha,x}(w)$ is  continuous as a function of a real variable for $w \in [0,1]$ (if we define
 $Z_{\alpha,x}(1)=0$) and, therefore, it is bounded, say
\begin{equation*} 
|Z_{\alpha,x}(w)| \leq C = C(\alpha,x) \; \hbox{for} \; w \in [0,1]. \label{Z-bound}
 \end{equation*}
 
Hence, using hypothesis $(H1)$ we see that the integral in the representation formula is convergent for any $\rho \in [0,1]$.  Moreover, from our assumptions we have that, for $\rho<1$,
\begin{align}
\nonumber g_{\rho}(x) & =  \frac{1}{\Gamma(\alpha+1)}
\sum_{k=0}^\infty m_k \rho^k L_k^\alpha(x) e^{-\frac{x}{2}}
\\ \nonumber & = \frac{1}{\Gamma(\alpha+1)}
\sum_{k=0}^\infty \left( \int_0^\infty \rho^k e^{-kt} d\Psi(t) \right)  L_k^\alpha(x) e^{-\frac{x}{2}}
\\ \nonumber & = \lim_{N \to +\infty} \frac{1}{\Gamma(\alpha+1)}
\sum_{k=0}^N \left( \int_0^\infty \rho^k e^{-kt} d\Psi(t) \right)  L_k^\alpha(x) e^{-\frac{x}{2}}
\\ & = \lim_{N \to +\infty} \frac{1}{\Gamma(\alpha+1)} e^{-\frac{x}{2}} \int_0^\infty Z_{\alpha,x}^{(N)}(\rho e^{-t}) \; d\Psi(t) \label{limN}
\end{align}
where
$$ 
Z_{\alpha,x}^{(N)}(w) = \sum_{k=0}^N L_k^\alpha(x) w^k
$$
denotes a partial sum of the series for $Z_{\alpha,x}(w)$. Now, since $\rho<1$, that series converges uniformly in the
interval $[0,\rho]$, so that given $\varepsilon>0$ there exists $N_0=N_0(\varepsilon)$ such that
$$
 |Z_{\alpha,x}(w)-Z_{\alpha,x}^{(N)}(w)| < \varepsilon \; \hbox{if} \; N \geq N_0.
 $$
 
Using this estimate and hypothesis $(H1)$, we obtain
\begin{align*}
& \left|  \int_0^\infty Z_{\alpha,x}(\rho e^{-t}) \; d\Psi(t)
-  \int_0^\infty Z_{\alpha,x}^{(N)}(\rho e^{-t}) \; d\Psi(t) \right| 
\\ & \leq \int_0^\infty
|Z_{\alpha,x}(\rho e^{-t})-Z_{\alpha,x}^{(N)}(\rho e^{-t})| \; |d\Psi|(t) 
\\ &\leq C \varepsilon  
\end{align*}
 from which we conclude that
\begin{equation}
\label{limZN}
\lim_{N \to +\infty}  \int_0^\infty Z_{\alpha,x}^{(N)}(\rho e^{-t}) \; d\Psi(t)
=   \int_0^\infty Z_{\alpha,x}(\rho e^{-t}) \; d\Psi(t) 
\end{equation}
and, replacing \eqref{limZN} into  \eqref{limN} we obtain \eqref{rep-grho}.

\medskip

(3) We have already observed that the integral in \eqref{rep-grho} is convergent for $\rho=1$. Moreover, the
bound we have proved above for $Z_{\alpha,x}$, and $(H1)$ imply that we can apply the Lebesgue bounded convergence theorem to this integral (with a constant majorant function, which is integrable with respect to $|d\Psi|(t)$ by $(H1)$), to conclude that $g(x)=\lim_{\rho \to 1}g_\rho(x)$.

\medskip

(4) Let $\delta$ be as in $(H2)$ and observe that 
\begin{align*}
\Gamma(\alpha+1) g_\rho(x) &=  e^{-\frac{x}{2}} \int_0^\infty Z_{\alpha,x}(\rho e^{-t}) d\Psi(t)
\\ &=   e^{-\frac{x}{2}} \int_0^\delta Z_{\alpha,x}(\rho e^{-t}) d\Psi(t) + e^{-\frac{x}{2}}  \int_\delta^\infty Z_{\alpha,x}(\rho e^{-t}) d\Psi(t) 
\\ & =  \underbrace{   e^{-\frac{x}{2}} \int_0^\delta Z_{\alpha,x}^\prime (\rho e^{-t}) \rho e^{-t} \Psi(t) \, dt}_{(i)} + \underbrace{ e^{-\frac{x}{2}} Z_{\alpha,x}(\rho e^{-\delta}) \Psi(\delta)}_{(ii)}
\\ & \quad - \underbrace{e^{-\frac{x}{2}}   Z_{\alpha,x}(\rho) \Psi(0)}_{(iii)}  +  \underbrace{  e^{-\frac{x}{2}}  \int_\delta^\infty Z_{\alpha,x}(\rho e^{-t}) d\Psi(t) }_{(iv)}
\end{align*}

Since  $|Z_{\alpha,x}(\rho e^{-\delta})|\le (1-\rho e^{-\delta})^{-\alpha-1} \le C_\delta$,  $\Psi(0)=0$,  and  $\sigma-\alpha-1<0$,  clearly 
$(ii) \le C x^{\sigma-\alpha-1}$ and $(iii)$ vanishes.

To bound $(iv)$, notice that if $\omega= \rho e^{-t}$ and $t>\delta$, $0\le Z_{\alpha,x}(\omega) \le M_\delta$. Therefore, using $(H1)$ and the fact that $\sigma-\alpha-1<0$ we obtain
\begin{equation*}
(iv)  \le e^{-\frac{x}{2}} M_\delta \int_\delta^\infty |d \Psi|(t)  \le C x^{\sigma-\alpha-1}.
\end{equation*}

Now, observing that 
\begin{equation*}
Z_{\alpha,x}^{\prime}(\omega) = (\alpha+1) Z_{\alpha+1,x}(\omega) - x Z_{\alpha+2,x}(\omega).
\end{equation*}
and using $(H2)$, we obtain
\begin{align*}
(i) & \le C e^{-\frac{x}{2}} \int_0^\delta Z_{\alpha+1,x}(\rho e^{-t}) \rho e^{-t} t^{\sigma} \, dt 
\\ & \quad + e^{-\frac{x}{2}} \int_0^\delta x Z_{\alpha+2,x}(\rho e^{-t}) \rho e^{-t} t^{\sigma} \, dt
\end{align*}
and the wanted estimates in this case follow by  a direct application of the following lemma.
\end{proof}

\begin{lemma}
In the conditions of Lemma \ref{lemma-g}(4), if
$$I(x) = e^{-\frac{x}{2}} \int_0^\delta Z_{\beta,x}(\rho e^{-t}) \rho e^{-t} t^{\sigma} \, dt,$$
and $\beta=\alpha+1$ or $\beta=\alpha+2$ then, $|I(x)| \le C x^{\sigma-\beta}$ with $C=C(\beta, \sigma, \delta, \rho_0)$.
\end{lemma}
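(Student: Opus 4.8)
The plan is to substitute the explicit form of the generating function, $Z_{\beta,x}(w)=(1-w)^{-\beta-1}\exp\bigl(-xw/(1-w)\bigr)$, and then to read off the two competing features of the integrand. The factor $(1-\rho e^{-t})^{-\beta-1}$ produces a singularity of order $t^{-\beta-1}$ as $t\to0$, while the exponential factor $\exp\bigl(-x\rho e^{-t}/(1-\rho e^{-t})\bigr)$ supplies decay that behaves like $\exp(-cx/t)$ for small $t$ and is exactly what tames that singularity. Since $Z_{\beta,x}\ge0$ on $[0,1)$ and the remaining factors are nonnegative, the integrand is nonnegative, so $|I(x)|=I(x)$ and it suffices to bound $I(x)$ from above.

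The key step is to control $1-\rho e^{-t}$ uniformly for $\rho\in(\rho_0,1]$. Writing $\eta=1-\rho\in[0,1-\rho_0)$ and using $1-e^{-t}\le t$ together with $1-e^{-t}\ge e^{-\delta}t$ on $[0,\delta]$, one obtains the two-sided comparison
\[
c_3(\eta+t)\ \le\ 1-\rho e^{-t}\ \le\ \eta+t,\qquad t\in[0,\delta],
\]
with $c_3=\min(1,\rho_0 e^{-\delta})$, together with the lower bound $\rho e^{-t}\ge \rho_0 e^{-\delta}=:c_2>0$. Feeding these into $Z_{\beta,x}$, and using $e^{-x/2}\le1$, $\rho e^{-t}\le1$ and $t^{\sigma}\le(\eta+t)^{\sigma}$, reduces the estimate to
\[
I(x)\ \le\ C\int_0^\delta (\eta+t)^{\sigma-\beta-1}\exp\!\Bigl(-\frac{c_2 x}{\eta+t}\Bigr)\,dt,
\]
where $C$ depends only on $\beta,\sigma,\delta,\rho_0$.

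To finish, I would substitute $r=\eta+t$ and enlarge the domain to $(0,\infty)$ (legitimate since the integrand is positive), which removes all dependence on $\eta$, hence on $\rho$; a second substitution $s=c_2 x/r$ turns the integral into $(c_2 x)^{\sigma-\beta}\int_0^\infty s^{\beta-\sigma-1}e^{-s}\,ds=(c_2 x)^{\sigma-\beta}\,\Gamma(\beta-\sigma)$. The Gamma integral converges precisely because $\beta-\sigma>0$, which holds since $\sigma<\alpha+1\le\beta$, and the integrability at $r\to\infty$ uses $\sigma-\beta-1<-1$. This yields $I(x)\le C x^{\sigma-\beta}$ with a constant of the asserted dependence.

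The step I expect to be the main obstacle is the uniformity in $\rho$ as $\rho\to1$: when $\rho$ is bounded away from $1$ the prefactor $(1-\rho e^{-t})^{-\beta-1}$ is harmless, but its blow-up as $\rho\to1$ must be absorbed by the exponential in a $\rho$-independent fashion. The two-sided comparison above, which matches the singular scale $\eta+t$ of the prefactor to that appearing in the exponent, is exactly what makes the final reduction to a single Gamma integral valid uniformly in $\rho$.
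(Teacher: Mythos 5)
Your proof is correct, and it handles the one genuinely delicate point --- uniformity as $\rho\to1$ --- properly: the two-sided comparison $c_3\bigl((1-\rho)+t\bigr)\le 1-\rho e^{-t}\le (1-\rho)+t$ on $[0,\delta]$ follows from $te^{-\delta}\le 1-e^{-t}\le t$, the bound $\rho e^{-t}\ge\rho_0e^{-\delta}$ yields the decay $\exp\bigl(-c_2x/((1-\rho)+t)\bigr)$, and the final Gamma integral converges precisely because $\sigma<\alpha+1\le\beta$. Your route differs from the paper's in execution, though the endgame is the same. The paper changes variables twice --- first $w=\rho e^{-t}$, which turns $t^\sigma$ into $\log^\sigma(\rho/w)$, then $u=\frac12+\frac{w}{1-w}$ --- and must then tame the logarithm via $\log\bigl(\rho\frac{u+1/2}{u-1/2}\bigr)\le\frac{u(\rho-1)+\frac12(\rho+1)}{u-\frac12}$ together with the observation that $0\le u(\rho-1)+\frac12(\rho+1)\le\rho\le1$, before reducing to $\int_0^\infty u^{\beta-\sigma-1}e^{-ux}\,du=\Gamma(\beta-\sigma)\,x^{\sigma-\beta}$. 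You never leave the variable $t$: you replace the exact substitutions by pointwise two-sided estimates and land on the inverted integral $\int_0^\infty r^{\sigma-\beta-1}e^{-c_2x/r}\,dr=\Gamma(\beta-\sigma)(c_2x)^{\sigma-\beta}$, which is carried to the paper's integral by the inversion $u\sim 1/r$ (since $u\asymp 1/(1-w)$ and $1-w\asymp(1-\rho)+t=r$). What your version buys is the complete avoidance of the logarithm estimate and of the somewhat opaque $u$-substitution: the uniformity in $\rho$, which the paper encodes in the bound $\tilde u(\rho)\le1$ and in $u\ge\frac12+\frac{c_\delta\rho_0}{1-c_\delta\rho_0}$, is carried in your argument entirely by the single transparent comparison $1-\rho e^{-t}\asymp(1-\rho)+t$. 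A minor cosmetic difference: you discard $e^{-x/2}\le1$ while the paper folds it into the exponent $e^{-ux}$; both give the required $Cx^{\sigma-\beta}$ with $C=C(\beta,\sigma,\delta,\rho_0)$.
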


\begin{proof}
Making the change of variables $w= \rho e^{-t}$, and recalling the definition of $Z_{\beta,x}(w)$ given by \eqref{generating-function}, 
we see that
\begin{align*} 
I(x) & 
 =  e^{-\frac{x}{2}} \int_{\rho e^{-\delta}}^\rho (1-w)^{-\beta-1} e^{-\frac{xw}{1-w}}  \log^{\sigma}\left(\frac{\rho}{w}\right) \, dw 
\end{align*}

Making a further change of variables $u=\frac12 + \frac{w}{1-w}$ and setting $c_\delta = e^{-\delta}$ this is
\begin{align}
\nonumber I(x) & = \int_{\frac12 + \frac{c_\delta \rho}{1-c_\delta \rho}}^{\frac12 + \frac{\rho}{1-\rho}} \left(u+\frac12\right)^{\beta+1} e^{-ux} \left[ \log\left( \rho \frac{u+\frac12}{u-\frac12} \right)\right]^{\sigma} \frac{1}{\left( u+\frac12 \right)^2} \, du
\\ & \le C \int_{\frac12 + \frac{c_\delta \rho}{1-c_\delta \rho}}^{\frac12 + \frac{\rho}{1-\rho}} u^{\beta-1} e^{-ux} \left( u - \frac12 \right)^{-\sigma} \underbrace{\left[u(\rho-1)+\frac12 (\rho+1)\right]^{\sigma}}_{:= \tilde u(\rho)} \, du \label{just}
\end{align}
where in \eqref{just} we have used that,  since 
$$ \rho \frac{u+\frac12}{u-\frac12}  = 1+\frac{u(\rho-1) + \frac12 (\rho+1)}{u-\frac12},$$
then 
$$\log\left( \rho \frac{u+\frac12}{u-\frac12} \right) \le \frac{u(\rho-1) + \frac12 (\rho+1)}{u-\frac12}.$$

Since  $\frac12 <  u\le \frac12 + \frac{\rho}{1-\rho}$,  it is immediate that 
$$0 \le u(\rho-1)+ \frac12(\rho+1)\le \rho,$$
 which, using that $\sigma \ge 0$,  implies  $ \tilde u(\rho)\le 1.$

 Also, since 
 $$
u \ge \frac12 + \frac{c_\delta \rho_0}{1- c_\delta \rho_0} > \frac12 $$
 we have that 
 $$
 \left(u-\frac12 \right)^{-\sigma} \le C u^{-\sigma}
 $$
 where the constant depends only on $\rho_0$ and $\delta$.
  Therefore, 
\begin{align}
\nonumber I(x) & \le C \int_0^\infty u^{\beta-\sigma-1} e^{-ux} \, du
\\ & =   C x^{-\beta+\sigma} \int_0^\infty v^{\beta-\sigma-1} e^{-v} \, dv \label{r1}
\\ & \le C x^{-\beta+\sigma} \label{r2}
\end{align}
where in \eqref{r1} we have made the change of variables $v=ux$, and in \eqref{r2} we have used that  $\beta-\sigma-1> -1$ because $\beta=\alpha+1$ or $\beta=\alpha+2$.

\end{proof}

\subsection{Step 2: weighted estimates for the generalized Euclidean convolution}

Following the idea of the previous section, we define a regularized multiplier operator
$M_{\alpha,m,\rho}$ by:
\begin{equation}
\label{Mamr}
 M_{\alpha,m,\rho} f(x):= \sum_{k=0}^\infty m_k \rho^k a_{k,\alpha}(f) l_
k^\alpha(x) 
\end{equation}

In this section we will obtain the estimate
\begin{equation}
\label{acotacion}
 \left( \int_0^\infty  |M_{\alpha,m,\rho}(f)|^q x^{\alpha-bq} \; dx \right)^{\frac{1}{q}} \leq
C \left( \int_0^\infty |f|^p x^{\alpha+ap} \; dx \right)^{\frac{1}{p}} 
\end{equation}
for  $f \in L^p(\mathbb{R}_+, x^{\alpha +ap})$ with a constant $C$ independent of the regularization parameter $\rho$ and appropriate $a,b$ (see Theorem \ref{teo-convolucion}). 

Indeed, the operator can be expressed as before as a twisted generalized convolution
with kernel $G_{\rho}(y)=g_\rho(y^2)$ (in place of $G$), and by Lemma \ref{lemma-g}, if $F(y)=f(y^2)$, we have the pointwise bound
$$ |M_{\alpha,m,\rho} f(x^2)| \leq (|F| \star |G_\rho|)(x) \leq C ( |F| \star |x^{2(\sigma-\alpha-1)}|)(x).$$
Therefore, \eqref{acotacion} will follow from a weighted inequality for the generalized Euclidean convolution
with kernel $K_\sigma := x^{2(\sigma-\alpha-1)}$ (Theorem \ref{teo-convolucion}).

Once we have \eqref{acotacion}, Theorem \ref{main-result} will follow by a standard density argument. Indeed, if we consider the space
$$ E= \{ f(x)=p(x) e^{-\frac{x}{2}} : 0 \leq  x, \, p(x) \mbox{ a polynomial} \}, $$
any $f \in E$ has only  a finite number of non-vanishing Laguerre coefficients. In that case, it is straightforward that
$ M_{\alpha,m} f(x)$ is well-defined and:
$$
M_{\alpha,m} f(x) = \lim_{\rho \to 1} M_{\alpha,m,\rho} f(x).
$$

Then, by Fatou's lemma, 
$$
 \int_0^\infty |M_{\alpha,m}(f)|^q x^{\alpha-bq} \; dx
\le \lim_{\rho \to 1} \int_0^\infty   |M_{\alpha,m,\rho}(f)|^q x^{\alpha-bq} \; dx
$$
and, therefore, we obtain
$$ \left( \int_0^\infty  |M_{\alpha,m,\rho}(f)|^q x^{\alpha-bq} \; dx \right)^{\frac{1}{q}} \leq
 C \left( \int_0^\infty |f|^p x^{\alpha+ap} \; dx \right)^{\frac{1}{p}}  \;
\forall f \in E.$$

Since $E$  is dense in $L^p(\mathbb{R}_+,x^{\alpha+a p})$,
we deduce that $M_{\alpha,m}$ can be extended to a bounded operator from $L^p(\mathbb{R}_+, x^{\alpha +ap})$ to $L^q(\mathbb{R}_+, x^{\alpha -bq})$. Moreover, the extended operator satisfies:

$$ M_{\alpha,m} f = \lim_{\rho \to 1} M_{\alpha,m,\rho} f.$$

This means that the formula \eqref{multiplier-operator} is valid for $f\in L^p(\mathbb{R}_+, x^{\alpha +ap})$ if the summation is interpreted in the Abel sense with convergence in $L^q(\mathbb{R}_+, x^{\alpha -bq})$. Therefore, to conclude the proof of Theorem \ref{main-result} in the case $\alpha\ge 0$ it is enough to see that the following result holds:
 
\begin{theorem}
\label{teo-convolucion}
Let $\alpha \ge 0,  0<\sigma<\alpha+1$ and $M_{\alpha,m,\rho}$ be given by \eqref{Mamr} such that it satisfies $(H1)$ and $(H2)$. Then, for all $f\in L^p(\mathbb{R}_+, x^{\alpha+ap})$, the following estimate holds
\begin{equation*}
 \|M_{\alpha,m,\rho}f(x^2) x^{-2b} \|_{L^q(\mathbb{R}_+, x^{2\alpha+1})}
\leq  \| f(x^2) x^{2a} \|_{L^p(\mathbb{R}_+, x^{2\alpha+1})}
\end{equation*}
 provided that 
\begin{equation*}
a<\frac{\alpha+1}{p'} \quad , \quad b<\frac{\alpha+1}{q}
\end{equation*}
and that
\begin{equation*}
  \left(\frac{1}{q}-\frac{1}{p} \right)\left(\alpha +\frac12 \right) \le a+b \le \left( \frac{1}{q} -\frac{1}{p}\right) (\alpha+1) + \sigma
\end{equation*}
\end{theorem}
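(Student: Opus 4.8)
The plan is to take the pointwise domination $|M_{\alpha,m,\rho}f(x^2)|\le C(|F|\star K_\sigma)(x)$ with $F(y)=f(y^2)$ and $K_\sigma(y)=y^{2(\sigma-\alpha-1)}$ already established, and prove the weighted bound directly for the generalized Euclidean convolution against the homogeneous kernel $K_\sigma$. The first step is to linearize the generalized translation. Writing $z=(x,y)_\theta$ and using the law of cosines together with Heron's formula for the area of the triangle with sides $x,y,z$, I would convert the $\theta$ integral defining $\tau^E_x$ into an integral over $z\in(|x-y|,x+y)$, with the Jacobian producing the factor $(\sin\theta)^{2\alpha-1}$ expressed through $[(z^2-(x-y)^2)((x+y)^2-z^2)]^{\alpha-1/2}$. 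After exchanging the order of integration this exhibits $F\star K_\sigma$ as a genuine integral operator $\int_0^\infty N(x,z)F(z)\,dz$ whose kernel is symmetric in the triangle variables and homogeneous, $N(\lambda x,\lambda z)=\lambda^{2\sigma-1}N(x,z)$, so that $N(x,z)=x^{2\sigma-1}\kappa(z/x)$ for a profile $\kappa(t)=N(1,t)$ carrying all the information.

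The second step is to exploit this homogeneity to pass to the multiplicative group $(\R_+,\cdot)$ with Haar measure $dx/x$. After the substitutions $u(x)=x^{2a+(2\alpha+2)/p}|F(x)|$ and $v(x)=x^{-2b+(2\alpha+2)/q}|M_{\alpha,m,\rho}f(x^2)|$ the measure $x^{2\alpha+1}\,dx$ becomes $dx/x$, and the desired inequality reads $\|v\|_{L^q(dx/x)}\le C\|u\|_{L^p(dx/x)}$. A direct computation then rewrites $v(x)=x^{E}\,(u\star_{\mathrm{mult}}\Phi)(x)$, where $\Phi(t)=\kappa(t)\,t^{\,1-2a-(2\alpha+2)/p}$, $\star_{\mathrm{mult}}$ is convolution on $(\R_+,\cdot)$, and $E=2\sigma-2(a+b)+(2\alpha+2)\left(\tfrac1q-\tfrac1p\right)$. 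On the scaling line $a+b=\sigma+(\alpha+1)(\tfrac1q-\tfrac1p)$, the upper endpoint of the admissible interval, one has $E=0$ and Young's inequality on the group gives the estimate as soon as $\Phi\in L^r(dx/x)$ with $\tfrac1r=1-\tfrac1p+\tfrac1q$. The tails $t\to0$ and $t\to\infty$ of $\kappa$, fixed by the homogeneity degree $2\sigma-1$ and the narrowing triangle, produce exactly the side conditions $a<\frac{\alpha+1}{p'}$ and $b<\frac{\alpha+1}{q}$, while the diagonal $t=1$ governs the remaining constraint.

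The main obstacle is twofold and concentrated near the diagonal $t=1$ (equivalently $z=x$), where the triangle degenerates and $\sin\theta\to0$. First, one must control the behaviour of $\kappa$ through the endpoints where $(\sin\theta)^{2\alpha-1}$ is singular; this local, Euclidean part of the kernel is the source of the lower endpoint $(\tfrac1q-\tfrac1p)(\alpha+\tfrac12)$, the exponent $\alpha+\tfrac12$ being half the homogeneous dimension $2\alpha+1$ seen by the diagonal. Second, because a purely homogeneous kernel pins $a+b$ to the single scaling value, the full interval must be recovered by running the Young estimate at both endpoints and interpolating, with a change of power weights, between them; I expect the delicate point to be a careful splitting of $\Phi$ into its near diagonal and off diagonal parts so that each piece lands in the correct weighted $L^r$ space. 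Throughout, the constant will be independent of $\rho$ because Lemma~\ref{lemma-g}(4) bounds $g_\rho$ uniformly for $\rho\in(\rho_0,1]$, which is exactly what lets the final Fatou argument pass to the limit $\rho\to1$.
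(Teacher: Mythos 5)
Your core mechanism is exactly the paper's: dominate $M_{\alpha,m,\rho}$ pointwise by the generalized Euclidean convolution with the homogeneous kernel $K_\sigma(y)=y^{2(\sigma-\alpha-1)}$, recognize the result as a convolution on the multiplicative group $(\mathbb{R}_+,\cdot)$ with Haar measure $\frac{dx}{x}$, apply Young's inequality on the scaling line, and read off $a<\frac{\alpha+1}{p'}$ and $b<\frac{\alpha+1}{q}$ from the tails of the profile and the remaining constraint from the diagonal singularity. The genuine gap is in how you cover the full interval of values of $a+b$. As you yourself note, any estimate that uses only the fixed homogeneous majorant is pinned by dilation covariance to the single line $a+b=(\frac{1}{q}-\frac{1}{p})(\alpha+1)+\sigma$; your remedy is to ``run Young at both endpoints and interpolate with a change of power weights.'' But your method produces only the \emph{upper} endpoint, so there is no second estimate to interpolate against, and no splitting of $\Phi$ into near- and off-diagonal pieces can rescue this: every piece is still controlled through the same homogeneous majorant, whose associated operator is genuinely unbounded off the scaling line, so the scaling obstruction applies to the sum of the pieces no matter how you split. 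The idea you are missing is a one-line self-improvement that makes all of this unnecessary: if $(H2)$ holds with exponent $\sigma$, it holds with every exponent $\sigma'\in(0,\sigma]$, hence Lemma~\ref{lemma-g}(4) gives the majorant $x^{2(\sigma'-\alpha-1)}$ for all such $\sigma'$. Every admissible $a+b$ lies on the scaling line of the appropriate $\sigma'$, namely $\sigma'=a+b-(\frac{1}{q}-\frac{1}{p})(\alpha+1)$, which the lower constraint and $p\le q$ force to satisfy $\sigma'\ge \frac12(\frac{1}{p}-\frac{1}{q})\ge 0$; a single Young argument run at that $\sigma'$ then covers the whole interval, with no interpolation at all.

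A second, smaller defect: you ask for $\Phi\in L^r(\frac{dx}{x})$ with $\frac{1}{r}=1-\frac{1}{p}+\frac{1}{q}$, i.e.\ strong Young. At the lower endpoint $a+b=(\frac{1}{q}-\frac{1}{p})(\alpha+\frac12)$ (equality is allowed in the statement), the reduced exponent is $\sigma'=\frac12(\frac{1}{p}-\frac{1}{q})$ and the diagonal singularity of the profile is $|1-t|^{2\sigma'-1}=|1-t|^{-\frac{1}{r}}$, which lies in $L^{r,\infty}$ but \emph{not} in $L^{r}$. This is precisely why the paper measures the kernel in $L^{s,\infty}(\frac{dx}{x})$ and uses the weak form of Young's inequality; with strong Young your argument can only give the theorem with strict inequality in the lower constraint on $a+b$. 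Finally, note that the hard analytic content you defer --- the behavior of the profile near the diagonal, i.e.\ of $I_{2(1+\alpha-\sigma),\alpha-\frac12}(r)$ as $r\to 1$ --- is the subject of a dedicated multi-case lemma in the paper; your structural description of its role (bounded, logarithmic, or power-singular according to the exponents) is correct, but it is not something that comes for free from homogeneity and would need to be proved.
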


\begin{proof}
First, notice that if condition $(H2)$ holds for a certain $0<\sigma_0<\alpha+1$, then it also holds for any $0<\sigma < \sigma_0$. Therefore,  it suffices to prove the theorem in the case $a+b = (\frac{1}{q} - \frac{1}{p})(\alpha+1) + \sigma$ which in turn, by the conditions above, implies $\sigma \ge -\frac12 \left( \frac{1}{q}-\frac{1}{p}\right)$. 

Let $K_\sigma (x) := x^{2(\sigma-\alpha-1)}$, $F(y)=f(y^2)$  and recall that 
$$|M_{\alpha,m,\rho}f(x^2)| \le C (|F|\star |K_\sigma|)(x)$$
 where $\star$ denotes the generalized euclidean convolution defined by \eqref{gen-eucl}.

We begin by computing the generalized Euclidean translation of $K_\sigma$ given by \eqref{gen-trans}. Making the change of variables
$$
t=\cos \theta \Rightarrow dt = - \sin \theta \, d\theta
= - \sqrt{1-t^2} \, d\theta
$$ 
we see that
$$ \tau_x^E K_\sigma(y)= C(\alpha)
\int_{-1}^1 (x^2+y^2-2xyt)^{\sigma-\alpha-1} (1-t^2)^{\alpha-\frac12} \; dt .$$
Following the notation of our previous work \cite{ddd}, if we let 
$$ I_{\gamma,k}(r):= \int_{-1}^1 \frac{(1-t^2)^k}{(1-2rt+r^2)^{\frac{\gamma}{2}}}\; dt, $$
then
$$ \tau_x^E K_\sigma(y)=  C(\alpha)
y^{2(\sigma-\alpha-1)}  I_{2(1+\alpha-\sigma), \alpha-\frac12}\left( \frac{x}{y} \right)$$
and, therefore,
\begin{align}
\nonumber K_\sigma \star F(x) & = C \int_0^\infty y^{2(\sigma-\alpha-1)}  I_{2(1+\alpha-\sigma),\alpha-\frac12}\left( \frac{x}{y} \right) F(y) y^{2\alpha+1} dy  
\\ & = C \int_0^\infty y^{2\sigma}
I_{2(1+\alpha-\sigma),\alpha-\frac12}\left( \frac{x}{y} \right) F(y) \frac{dy}{y} \label{star}
\end{align}

Now,
\begin{align*}
 \|M_{\alpha,m,\rho}f(x^2) x^{-2b} \|_{L^q(\mathbb{R}_+, x^{2\alpha+1})} & \le C \| [ K_\sigma  \star F(x)] x^{-2b} \|_{L^q(\mathbb{R}_+,x^{2\alpha+1})} 
\\ & =  C \left( \int_0^\infty |  K_\sigma  \star F(x) x^{-2b} |^q x^{2\alpha+1} \; dx  \right)^{\frac{1}{q}}
 \\ & =  C \left( \int_0^\infty \left| K_\sigma  \star F(x)  x^{\frac{2\alpha+2}{q}-2b}
\right|^q\; \frac{dx}{x} \right)^{\frac{1}{q}}
\end{align*}
but, by \eqref{star},
\begin{align*}
 [  K_\sigma & \star F(x) ]  x^{\frac{2\alpha+2}{q}-2b}
\\ & =  C \int_0^\infty y^{2\sigma} x^{\frac{2\alpha+2}{q}-2b}
I_{2(1+\alpha-\sigma),\alpha-\frac12}\left( \frac{x}{y} \right) F(y) \frac{dy}{y} 
\\ & =  C \int_0^\infty
\left(\frac{y}{x}\right)^{-[\frac{2\alpha+2}{q}-2b]}
I_{2(1-\alpha-\sigma),\alpha-\frac12}\left( \frac{x}{y} \right) F(y)
y^{2\sigma+\frac{2\alpha+2}{q}-2b} \frac{dy}{y} 
\\ & = [ y^{\frac{2\alpha+2}{q}-2b} I_{2(1+\alpha-\sigma),\alpha-\frac12}(y) *
 F(y) y^{2\sigma+\frac{2\alpha+2}{q}-2b} ](x)
 \end{align*}
where $*$ denotes the convolution in $\mathbb{R}_+$ with respect to the Haar measure $\frac{dx}{x}$.

Then, by Young's inequality:
\begin{align*}
\|M_{\alpha,m,\rho} & f(x^2) x^{-2b} \|_{L^q(\mathbb{R}_+, x^{2\alpha+1})} 
\\ & \leq \| F(x) x^{2\sigma+\frac{2\alpha+2}{q}-2b}
\|_{L^p\left(\frac{dx}{x} \right)}
\| x^{\frac{2\alpha+2}{q}-2b} I_{2(1+\alpha-\sigma),\alpha-\frac12}(x)
\|_{L^{s,\infty}(\frac{dx}{x})} 
\end{align*}

provided that:
\begin{equation}
\label{youngl}
 \frac{1}{p}+\frac{1}{s}=1+\frac{1}{q}.
\end{equation}

Since we are assuming that $a+b=\left( \frac{1}{q}-\frac{1}{p}\right)(\alpha +1)+\sigma$, we have that
\begin{align*}
\| F(x) x^{2\sigma+\frac{2\alpha+2}{q}-2b} \|_{L^p\left(\frac{dx}{x} \right)}  & = \left( \int_0^\infty | F(x) x^{2\sigma+\frac{2\alpha+2}{q}-2b} |^p  \; \frac{dx}{x}\right)^{\frac{1}{p}}
\\ & = \left( \int_0^\infty | F(x) x^{2a+\frac{2\alpha+2}{p}} |^p  \; \frac{dx}{x}\right)^{\frac{1}{p}}
\\ & = \| F(x) x^{2a} \|_{L^p(\mathbb{R}_+,x^{2\alpha+1})}
\\ & = \| f(x^2) x^{2a} \|_{L^p(\mathbb{R}_+,x^{2\alpha+1})}
\end{align*}
whence, to conclude the proof of the theorem it suffices to see that
$$\| x^{\frac{2\alpha+2}{q}-2b} I_{2(1+\alpha-\sigma),\alpha-\frac12}(x)
\|_{L^{s,\infty}(\frac{dx}{x})} < +\infty. $$

For this purpose, we shall use the following lemma, which is a generalization of our previous result \cite[Lemma 4.2]{ddd}. The first part of the proof is the same as in that lemma, but it is included here for the sake of completeness:

\begin{lemma}
Let
$$ I_{\gamma,k}(r)= \int_{-1}^1 \frac{(1-t^2)^k}{(1-2rt+r^2)^{\frac{\gamma}{2}}}\; dt $$

Then, for $r \sim 1$ and $k >-1$,
we have that
$$ |I_{\gamma,k}(r)| \leq
 \left\{
\begin{array}{lclcc}
C_{\gamma,k} & \mbox{if} &  \gamma<2k+2 \\
C_{\gamma,k} \log\frac{1}{|1-r|}& \mbox{if} &  \gamma=2k+2 \\
C_{\gamma,k} |1-r|^{-\gamma+2k+2} & \mbox{if} & \gamma > 2k+2\\
\end{array}
\right.
 $$

 \end{lemma}
 
\begin{proof}
Assume first that $k \in\N_0$ and $-\frac{\gamma}{2}+k > -1$. Then,
$$
I_{\gamma,k}(1)\sim\int_{-1}^1 \frac{(1-t^2)^k}{(2-2t)^{\frac{\gamma}{2}}} \, dt
\sim C \int_{-1}^1 \frac{(1-t)^k}{(1-t)^{\frac{\gamma}{2}}} \, dt.
$$
Therefore,  $I_{ \gamma,k}$ is bounded.

If $-\frac{\gamma}{2}+k =-1$, then
$$
I_{\gamma,k}(r)\sim\int_{-1}^1 (1-t^2)^k
\frac{d^k}{dt^k}\left\{(1-2rt+r^2)^{-\frac{\gamma}{2}+k}\right\}\, dt.
$$
Integrating by parts $k$ times (the boundary terms vanish),
$$
I_{\gamma,k}(r)\sim\left|\int_{-1}^1 \frac{d^k}{dt^k}\left\{(1-t^2)^k\right\}
(1-2rt+r^2)^{-\frac{\gamma}{2}+k}\, dt\right|.
$$
But $\frac{d^k}{dt^k}\left\{(1-t^2)^k\right\}$ is a polynomial of degree $k$
and therefore is bounded in $[-1,1]$ (in fact, it is up to a constant the
classical Legendre polynomial). Therefore,
$$
I_{ \gamma,k}(r) \sim \frac{1}{2r} \log\left(\frac{1+r}{1-r}\right)^2 \le C \log
\frac{1}{|1-r|}.
$$

Finally, if  $-\frac{\gamma}{2}+k <-1$, then
integrating by parts as before,
$$
I_{\gamma,k}(r)\le C_k\int_{-1}^{1}(1-2rt+r^2)^{-\frac{\gamma}{2}+k}\, dt.
$$

Thus, 
$$
I_{ \gamma,k}(r) \sim (1-2rt+r^2)^{-\frac{\gamma}{2}+k+1}|_{t=-1}^{t=1}
\le C_{k,\gamma} |1-r|^{-\gamma+2k+2}.
$$

This finishes the proof if $k\in \mathbb{N}_0$.

Consider now the case $k=m+\nu$ with $m\in \mathbb{N}_0$ and $0<\nu<1$.
Then,
\begin{align*}
I_{\gamma, k}(r) & = \int_{-1}^1 (1-t^2)^{\nu(m+1)+(1-\nu)m} (1-2rt + r^2)^{-\frac{\nu\gamma}{2}-\frac{(1-\nu)\gamma}{2}}  \, dt
\\Ê& \le I_{m+1, \gamma}^\nu(r) I_{m, \gamma}^{1-\nu}(r),
\end{align*}
where in the last line we have used  H\"older's inequality with exponent $\frac{1}{\nu}$.

If $\gamma<2m+2$, by the previous calculation
$$
|I_{\gamma,k}(r)|\le C.
$$

If $\gamma > 2(m+1)+2$, then, by the previous calculation
\begin{align*}
|I_{\gamma,k}(r)| &\le C |1-r|^{\nu(-\gamma+2(m+1)+2)}  |1-r|^{(1-\nu)(-\gamma+2m+2)} 
\\ &= C |1-r|^{-\gamma+2k+2}.
\end{align*}

For the case $2m+2 < \gamma < 2m+4 $, notice that we can always assume $r<1$, since $I_{ \gamma,k}(r) = r^{-\gamma} I_{\gamma,k}(r^{-1})$. Then, as before, we can prove that
$$
I_{\gamma,k}'(r) \le  \gamma (1-r) I_{\gamma+2,k}(r)
$$
But now we are in the case $\gamma + 2 > 2(m+1)+2$ and, thus, 
$$
|I_{\gamma+2,k}(r)| \le C |1-r|^{-\gamma+2k}.
$$

Therefore, if $-\gamma+2k+1\neq -1$
\begin{align*}
I_{\gamma,k}(r) &= \int_0^r I'_{\gamma,k}(s) \, ds
\\ & \le C \int_0^r (1-s)^{-\gamma+2k+1} \, ds 
\\ & \le C |1-r|^{-\gamma+2k+2},
\end{align*}
and if
$-\gamma+2k+1= -1$
\begin{align*}
I_{\gamma,k}(r) &\le C \int_0^r \frac{1}{1-s} \, ds 
\\ &= C \log \frac{1}{|1-r|}.
\end{align*}

It remains to check the case $k \in (-1, 0)$. For this purpose, write
$$
I_{ \gamma,k}(r) = \underbrace{\int_{-1}^0 \frac{(1-t^2)^{k}}{(1-2rt+r^2)^\frac{\gamma}{2}} \, dt}_{(i)}
+ \underbrace{\int_0^1 \frac{(1-t^2)^{k}}{(1-2rt+r^2)^\frac{\gamma}{2}} \, dt}_{(ii)}
$$

Since $\gamma>0$ and $k+1>0$,
$$
 (i)  \le \int_{-1}^0 (1+t)^{k} \, dt = C
$$
\begin{align*}
 (ii) & \le \int_0^1 \frac{(1-t)^{k}}{(1-2rt+r^2)^{\frac{\gamma}{2}}} \, dt
\\ & = -\frac{1}{k+1} \int_0^1 \frac{\frac{d}{dt}[(1-t)^{k+1}]}{(1-2rt+r^2)^{\frac{\gamma}{2}}} \, dt
 \\ & = \frac{2r}{k+1} \int_0^1 \frac{(1-t)^{k+1}}{(1-2rt+r^2)^{\frac{\gamma}{2}+1}} \, dt 
 \\ & \le  C I_{\gamma+2, k+1}(r).
\end{align*}
and, since now $k+1>0$, $I_{\gamma, k}$ can be bounded as before. This concludes the proof of the lemma.
\end{proof}

Now we are ready to conclude the proof Theorem \ref{teo-convolucion}. Remember that we need to see that
\begin{equation}
\| x^{\frac{2\alpha+2}{q}-2b} I_{2(1+\alpha-\sigma),\alpha-\frac12}(x)
\|_{L^{s,\infty}(\frac{dx}{x})} < +\infty. 
\label{norma}
\end{equation}

Using the previous lemma, it is clear that when $x\to 1$ and $2(\alpha+1-\sigma)\le 2(\alpha - \frac12)$ the norm in \eqref{norma} is bounded.

In the case  $2(\alpha+1-\sigma)> 2(\alpha - \frac12)$ (that is, $\sigma<3$), the integrability condition is $$-s \left[ 2(\alpha+1-\sigma)-2 \left(\alpha-\frac12\right)-2 \right] \ge -1.$$
But, using \eqref{youngl}, we see that this is equivalent to $\sigma \ge -\frac12 \left( \frac{1}{q}-\frac{1}{p}\right)$, which holds by our assumption on $a+b$.

When $x=0$, the integrability condition is
$$
\frac{2\alpha+2}{q}-2b > 0
$$
which holds because $b<\frac{\alpha+1}{q}$.

Finally, when $x\to\infty$, since $I_{\alpha-\frac12, 2(\alpha+1-\sigma)}(x)\sim x^{-2(\alpha+1-\sigma)}$, the condition we need to fulfill is 
$$
\frac{2\alpha+2}{q}-2b-2(\alpha+1-\sigma)<0
$$
which, by our assumption on $a+b$ is equivalent to $a<\frac{\alpha+1}{p'}$.
\end{proof}

\subsection{Extension to the case  $-1<\alpha <0$ and end of proof of Theorem  \ref{main-result}}

As before, we may assume that $a+b= \left(\frac{1}{q}-\frac{1}{p}\right)(\alpha+1) +\sigma.$ In this case, to extend our result to the case  $-1<\alpha <0$ let us consider $-1<\alpha<\beta$, where $\beta\ge 0$, and use a transplantation result from  \cite{Garrigos}, that we recall here as a lemma for the sake of completeness:

\begin{lemma}[\cite{Garrigos}, Corollary  6.19 (ii)]
\label{lema-garrigos}
Let $1<q<\infty$. Given $\alpha, \beta >-1$, we define the transplantation operator
$$
\mathbb{T}_\beta^\alpha f = \sum_{k=0}^\infty \left( \int_0^\infty f(y) l_k^\alpha(y) y^{\alpha} \, dy \right) l_k^\beta.
$$
Then, if $\sigma_0 \in \mathbb{R}$ and $\sigma_1 = \sigma_0 + (\alpha -\beta)(\frac{1}{p} - \frac12)$, $\mathbb{T}_\beta^\alpha : L_{\sigma_0}^q (\mathbb{R}_+, x^\alpha \, dx) \to L_{\sigma_1}^q (\mathbb{R}_+, x^\beta \, dx)$ and $\mathbb{T}_\alpha^\beta : L_{\sigma_1}^q (\mathbb{R}_+, x^\beta \, dx) \to L_{\sigma_0}^q (\mathbb{R}_+, x^\alpha \, dx)$ are bounded operators if and only if 
$$
-\frac{1+\alpha}{q}<\sigma_0 <\frac{1+\alpha}{q^\prime}.
$$
\end{lemma}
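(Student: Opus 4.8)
The plan is to realize the transplantation operator as an integral operator and to control its kernel by separating a local singular part from a global part. Writing $\mathbb{T}_\beta^\alpha f(x)=\int_0^\infty K(x,y)\,f(y)\,y^\alpha\,dy$ with formal kernel $K(x,y)=\sum_{k=0}^\infty l_k^\alpha(y)\,l_k^\beta(x)$ (understood in an Abel/Poisson-summable sense, exactly as the series for $g$ was regularized in Lemma \ref{lemma-g}), the claimed mapping between the spaces $L^q_{\sigma_0}(\mathbb{R}_+,x^\alpha\,dx)$ and $L^q_{\sigma_1}(\mathbb{R}_+,x^\beta\,dx)$ becomes an ordinary weighted norm inequality $\|\mathbb{T}_\beta^\alpha f\|_{L^q(w_1)}\le C\|f\|_{L^q(w_0)}$ after absorbing the power factors and the two measures into explicit power weights $w_0,w_1$. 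The stated relation $\sigma_1=\sigma_0+(\alpha-\beta)\left(\tfrac1q-\tfrac12\right)$ is precisely what makes the two weighted norms compatible, so that the single condition on $\sigma_0$ should govern both mapping properties simultaneously.

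First I would record the pointwise asymptotics of the Laguerre functions $l_k^\alpha$ in the four standard zones (the Bessel-type zone near $0$, the oscillatory interior, the Airy transition near the turning point, and the exponentially decaying zone), as in \cite{Thangavelu,Sz}. Substituting these into $K(x,y)$ and summing in $k$, I would split $\mathbb{R}_+\times\mathbb{R}_+$ into a local region $\{\tfrac12\le x/y\le 2\}$ and its complement. On the local region the leading oscillatory contributions of the two systems combine, after the logarithmic change of variables $x=e^u$, into a kernel of Hilbert-transform (Calder\'on--Zygmund) type; its weighted $L^q$ boundedness for $w_0,w_1$ then holds exactly when those weights lie in the Muckenhoupt class $A_q$ relative to the measure $x^\alpha\,dx$, which is equivalent to $-\tfrac{1+\alpha}{q}<\sigma_0<\tfrac{1+\alpha}{q'}$.

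On the complementary global region the kernel has genuine off-diagonal decay (polynomial as $x/y\to 0$ and $x/y\to\infty$, together with exponential cut-offs away from the turning points), so the operator is dominated by Hardy-type averaging operators on $(\mathbb{R}_+,\tfrac{dx}{x})$; these are bounded on the same open range of $\sigma_0$ by a direct Schur/Young argument in the multiplicative group $(\mathbb{R}_+,\cdot)$, of the same flavour as the estimate already carried out in the proof of Theorem \ref{teo-convolucion}. Combining the two regions yields the sufficiency (``if'') direction, and the partner operator $\mathbb{T}_\alpha^\beta$ follows by interchanging the roles of $\alpha$ and $\beta$, since up to the weights it is the adjoint of $\mathbb{T}_\beta^\alpha$. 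For the necessity (``only if'') direction I would test the inequality on functions concentrated near $x=0$ and near $x=\infty$ (or on individual $l_k^\beta$), tracking the power weights to force $\sigma_0$ into the stated interval.

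The main obstacle will be the local Calder\'on--Zygmund estimate: one must show that the diagonal singularities produced by the two different orders $\alpha$ and $\beta$ cancel, so that the local kernel is a genuine CZ kernel with no residual non-integrable singularity. This requires matching the phases across the oscillatory and Airy zones carefully, rather than the crude majorizations of $Z_{\alpha,x}$ that sufficed for the convolution kernel $G$ in Step 1.
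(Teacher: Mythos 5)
The first thing to say is that the paper does not prove this lemma at all: it is imported verbatim as Corollary 6.19(ii) of \cite{Garrigos} (``a transplantation result \dots that we recall here as a lemma for the sake of completeness'') and is then used as a black box in Section 2 to pass from the case $\beta\ge 0$ to $-1<\alpha<0$. So there is no internal proof to compare against; what you have written is an attempt to reprove a known and genuinely difficult theorem from the literature. Your outline does follow the broad strategy of that literature (\cite{K}, \cite{Stempak-Trebels}, \cite{Garrigos}): realize $\mathbb{T}_\beta^\alpha$ as a kernel operator through uniform asymptotics of the Laguerre functions, split into a diagonal region treated by weighted Calder\'on--Zygmund theory on the space of homogeneous type $(\mathbb{R}_+, x^\alpha\,dx)$ --- and your identification of the stated range of $\sigma_0$ with the power-weight $A_q$ condition relative to $x^\alpha\,dx$ is correct --- plus off-diagonal regions treated by Hardy-type operators, with necessity by testing. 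Your silent replacement of the paper's $\frac1p-\frac12$ by $\frac1q-\frac12$ is also the right reading of the statement, consistent with how the lemma is applied in \eqref{btilde} and \eqref{atilde2}.

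As a proof, however, the proposal has genuine gaps. The decisive one you flag yourself: showing that the diagonal singularities of the two systems combine into a bona fide Calder\'on--Zygmund kernel is the heart of the theorem, and it cannot be obtained from absolute-value majorizations of the type used for $Z_{\alpha,x}$ in Lemma \ref{lemma-g}; it requires the uniform four-zone estimates of Laguerre functions (as in \cite{Sz}, \cite{Thangavelu}) with control of the oscillatory phases and the Airy transition uniformly in $k$, followed by a nontrivial summation in $k$ --- none of which is carried out. A second, concrete flaw is your reduction for the partner operator: interchanging the roles of $\alpha$ and $\beta$ would give boundedness of $\mathbb{T}_\alpha^\beta$ under the condition $-\frac{1+\beta}{q}<\sigma_1<\frac{1+\beta}{q'}$, and dualizing $\mathbb{T}_\beta^\alpha$ gives a statement on $L^{q'}$ with $-\sigma_1$, $-\sigma_0$; neither yields the lemma as stated, whose content is precisely the asymmetric fact that the single two-sided condition on $\sigma_0$ relative to $\alpha$ governs both mapping properties at the same exponent $q$. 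For the same reason, your claim that the off-diagonal (Hardy) parts are bounded ``on the same open range of $\sigma_0$'' is unsubstantiated: near $x=0$ and $x=\infty$ the kernel's behavior in $x$ is governed by $l_k^\beta$, so conditions involving $\beta$ and $\sigma_1$ enter a priori, and showing they collapse, under the stated relation between $\sigma_0$ and $\sigma_1$, to the condition on $\sigma_0$ alone (and that this condition is also necessary) is a substantial part of the sharpness of the cited corollary. In short: the appropriate ``proof'' at the level of this paper is the citation itself; a self-contained argument along your lines would have to be of the scale of \cite{Garrigos}, not of a sketch.
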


Using this lemma, we can write 
\begin{align*}
\|M_{\alpha,m}f |x|^{-b}\|_{L^q(\mathbb{R}_+,x^{\alpha} \, dx)} & = \|\mathbb{T}_\alpha^\beta (M_{\beta,m}(\mathbb{T}_\beta^\alpha f)) |x|^{-b}\|_{L^q(\mathbb{R}_+, x^\alpha \, dx)}
\\ & \le C \|M_{\alpha,m, \beta}(\mathbb{T}_\beta^\alpha f) |x|^{-\tilde b}\|_{L^q(\mathbb{R}_+, x^{\beta} \, dx)}
\end{align*}
provided that
\begin{equation}
-1<\alpha<\beta
\end{equation}
\begin{equation}
\label{btilde}
-\tilde b = -b +(\alpha-\beta)\left(\frac{1}{q}-\frac12\right),
\end{equation}
and
\begin{equation}
\label{balfaq}
-\frac{1+\alpha}{q}<-b<\frac{1+\alpha}{q'},
\end{equation}
and, using Theorem \ref{teo-convolucion} for $M_{\beta,m}$ with $\beta \ge 0$, 
$$
\|M_{\alpha,m, \beta}(\mathbb{T}_\beta^\alpha f) |x|^{-\tilde b}\|_{L^q(\mathbb{R}_+, x^{\beta} \, dx)}
 \le C\|\mathbb{T}_\beta^\alpha f |x|^{\tilde a}\|_{L^p(\mathbb{R}_+, x^{\beta} \, dx)}
$$
provided that 
\begin{equation*}
0<\sigma<\beta+1 \quad, \quad \tilde a < \frac{\beta+1}{p'} \quad , \quad \tilde b <\frac{\beta+1}{q},
\end{equation*}
\begin{equation}
\label{desigtilde}
\left(\frac{1}{q}-\frac{1}{p}\right)\left(\beta+\frac12\right) \le \tilde a +\tilde b
\end{equation}
and that
\begin{equation}
\label{atildebtilde}
 \tilde a +\tilde b = \left(\frac{1}{q}-\frac{1}{p}\right)(\beta+1) + \sigma.
\end{equation}

Finally, using Lemma \ref{lema-garrigos}  again, we obtain
\begin{equation}
\|M_{\alpha,m}f |x|^{-b}\|_{L^q(\mathbb{R}_+,x^{\alpha} \, dx)} \le C \|f |x|^a\|_{L^p(\mathbb{R}_+, x^\alpha \, dx)}
\end{equation}
provided that
\begin{equation}
\label{atilde2}
\tilde a=a+(\alpha-\beta)\left(\frac{1}{p}-\frac{1}{2}\right)
\end{equation}
and that
\begin{equation}
\label{aalfap}
-\frac{1+\alpha}{p}<a<\frac{1+\alpha}{p'}.
\end{equation}

Now, replacing \eqref{btilde} and \eqref{atilde2} into \eqref{desigtilde} and \eqref{atildebtilde}  we obtain
\begin{equation*}
 \left(\frac{1}{q}-\frac{1}{p}\right)\left(\alpha+\frac12\right) \le a + b
 \end{equation*}
 and
 \begin{equation}
 \label{res}
 \quad a+b= \left(\frac{1}{q}-\frac{1}{p}\right)(\alpha+1) + \sigma.
\end{equation}
To conclude the proof of the theorem we need to see that the restrictions $a> -\frac{1+\alpha}{p}$ in \eqref{aalfap} and $b>-\frac{1+\alpha}{q'}$ in \eqref{balfaq} are redundant. Indeed, the first one follows from \eqref{res} and $b<\frac{\alpha+1}{q}$, while the second one follows from \eqref{res} and $a<\frac{\alpha+1}{p'}$. 

\section{Multipliers for related Laguerre systems}

In this section we show how the results for multipliers for expansions in the Laguerre system $\{l^\alpha_k\}_{k\ge 0}$  can be extended to other related systems, using a transference result from I. Abu-Falah, R. A. Mac\'ias, C. Segovia and J. L. Torrea \cite{AMST}. To this end, for fixed $\alpha>-1$, we consider the orthonormal systems:
\begin{enumerate}
\item $\{\mathcal{L}_k^\alpha(y) := y^{\frac{\alpha}{2}} l_k^\alpha(y)\}_{k\ge 0}$ in $L^2(\mathbb{R}_+)$
\item $\{\varphi_k^\alpha(y) := \sqrt 2 y^{\alpha+\frac12} l_k^\alpha(y^2)\}_{k\ge 0}$ in $L^2(\mathbb{R}_+)$
\item $\{\psi_k^\alpha (y) := \sqrt 2 l_k^\alpha(y^2)\}_{k\ge 0}$ in $L^2(\mathbb{R}_+, y^{2\alpha+1} \, dy)$
\end{enumerate}
which are eigenvectors of certain modifications of the Laguerre differential operator \eqref{laguerre}. 

Then, following the notations in \cite{AMST}, if we let $W^\alpha, V,$ and $Z^\alpha$ be the operators defined by
\begin{equation*}
W^\alpha f(y)=y^{-\frac{\alpha}{2}} f(y), \quad Vf(y)= (2y)^\frac12 f(y^2) , \quad and \quad Z^\alpha f(y)= \sqrt 2 y^{-\alpha} f(y^2)
\end{equation*}
it is immediate that $W^\alpha \mathcal{L}_k^\alpha = l_k^\alpha$, $V \mathcal{L}_k^\alpha = \varphi_k^\alpha$, and $Z^\alpha \mathcal{L}_k^\alpha = \psi_k^\alpha$. Moreover, for $f$ a measurable function with domain in $\mathbb{R}_+$, the following result holds:

\begin{lemma}[\cite{AMST}, Lemma 3.22]
\label{lema-cambio}
Let $\alpha>-1$. 
\begin{enumerate}
\item Let $\delta = \rho -\alpha(\frac{p}{2}-1)$, then $\|W^\alpha f \|_{L^p(\mathbb{R}_+, y^{\rho+\alpha})}= \|f \|_{L^p(\mathbb{R}_+, y^\delta)}$
\item Let $2\delta = \gamma +\frac{p}{2}-1$, then $\|Vf\|_{L^p(\mathbb{R}_+, y^\gamma)}= 2^{\frac12 -\frac{1}{p}}\|f \|_{L^p(\mathbb{R}_+,y^\delta)}$
\item Let $\delta = \frac{\eta}{2} -\alpha(\frac{p}{2}- 1)$, then $\|Z^\alpha f \|_{L^p(\mathbb{R}_+, y^{\eta+2\alpha+1})}= 2^{\frac12 -\frac{1}{p}}\|f\|_{L^p(\mathbb{R}_+, y^\delta)}$
\end{enumerate}
\end{lemma}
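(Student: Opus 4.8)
The plan is to treat all three identities by the same elementary scheme: substitute the explicit definition of the operator into the weighted $L^p$ norm on the left-hand side and reduce the resulting integral to the right-hand norm. Since $W^\alpha$, $V$ and $Z^\alpha$ each act by multiplication by a power of $y$ followed, in the cases of $V$ and $Z^\alpha$, by the substitution $y \mapsto y^2$, no harmonic analysis is needed: the entire content is a change of variables together with careful bookkeeping of the exponent of the weight.

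First I would dispose of part (1), which requires no change of variables. Inserting $W^\alpha f(y) = y^{-\alpha/2} f(y)$ gives
$$\|W^\alpha f\|_{L^p(\mathbb{R}_+, y^{\rho+\alpha})}^p = \int_0^\infty |f(y)|^p \, y^{\rho + \alpha - \alpha p/2} \, dy,$$
and it suffices to observe that the exponent equals $\rho - \alpha(p/2 - 1) = \delta$, so the integral is precisely $\|f\|_{L^p(\mathbb{R}_+, y^\delta)}^p$ and the multiplicative constant is $1$. For parts (2) and (3) I would perform the substitution $u = y^2$, $dy = \frac{1}{2} u^{-1/2}\, du$, after pulling out the prefactor. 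For $V$, the factor $(2y)^{1/2}$ contributes $2^{p/2} y^{p/2}$, so that
$$\|V f\|_{L^p(\mathbb{R}_+, y^{\gamma})}^p = 2^{p/2}\int_0^\infty |f(y^2)|^p \, y^{p/2 + \gamma}\, dy = 2^{p/2 - 1}\int_0^\infty |f(u)|^p \, u^{(\gamma + p/2 - 1)/2}\, du,$$
and the exponent $(\gamma + p/2 - 1)/2$ is exactly $\delta$ by the defining relation $2\delta = \gamma + p/2 - 1$. The argument for $Z^\alpha$ is identical, the constant $\sqrt 2$ producing the same factor $2^{p/2}$, and the weight exponent reducing, after the substitution, to $\frac{\eta}{2} + \alpha - \frac{\alpha p}{2} = \frac{\eta}{2} - \alpha\left(\frac{p}{2}-1\right) = \delta$.

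The only point that needs care, and the closest thing to an obstacle here, is the accounting of the multiplicative constant. In both (2) and (3) the prefactor contributes $2^{p/2}$ inside the integral while the Jacobian of $u = y^2$ contributes a factor $\frac{1}{2}$, giving $2^{p/2 - 1}$ before taking $p$-th roots; the stated constant $2^{1/2 - 1/p}$ is then exactly $(2^{p/2 - 1})^{1/p}$. Beyond tracking these powers of $2$ and matching each weight exponent against the prescribed $\delta$, the proof is a routine computation, so I would simply record the three substitutions and their resulting exponents.
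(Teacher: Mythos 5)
Your proof is correct: each of the three identities follows by exactly the computation you give, and your bookkeeping of the weight exponents and of the constant $2^{1/2-1/p}$ (from the prefactor $2^{p/2}$ against the Jacobian factor $\tfrac12$ of $u=y^2$) checks out. Note that the paper itself offers no proof of this lemma --- it is imported verbatim from the reference \cite{AMST} (Lemma 3.22 there) --- so there is nothing internal to compare against; your direct change-of-variables verification is the standard argument and is entirely adequate.
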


In analogy to what we have done for the system $\{l_k^\alpha\}_{k\ge 0}$, we can also define multipliers of Laplace transform type for the orthonormal systems listed above. For instance, in the case of the system  $\{\mathcal{L}_k^\alpha\}_{k\ge 0}$, if 
\begin{equation*}
 f(x) \sim \sum_{k=0}^\infty  b_{\alpha,k}(f) \mathcal{L}_k^\alpha(x), \quad   b_{\alpha,k}(f)= \int_0^\infty f(x) \mathcal{L}_k^\alpha(x)  dx
\end{equation*}
given a bounded sequence $\{m_k\}_{k\ge 0}$ we may define the multiplier
\begin{equation*}
 M_{\alpha,m}^\mathcal{L} f(x) \sim \sum_{k=0}^\infty  b_{\alpha,k}(f) m_k \mathcal{L}_k^\alpha(x),
 \end{equation*}
and we say that $M_{\alpha,m}^\mathcal{L}$ is a multiplier of Laplace transform type if $m_k=m(k)$ is given by \eqref{Laplace.transform} for some real-valued function $\Psi(t)$. Similar definitions can be given for the systems $\{\varphi_k^\alpha\}_{k\ge 0}$ and $\{\psi_k^\alpha\}_{k\ge 0}$; we will denote the corresponding multipliers by $M_{\alpha,m}^\varphi$ and $ M_{\alpha,m}^\psi$. Then, the following analogue of Theorem \ref{main-result} holds:

\begin{theorem}
\label{teo31}
Assume that  $\alpha>-1$. 
\begin{enumerate}
\item If $M_{\alpha,m}^{\mathcal{L}}$ is a multiplier of Laplace transform type for the system $\{\mathcal{L}_k^\alpha\}_{k\ge 0}$  such that $(H1)$ and $(H2)$ hold, then
\begin{equation*}
\|M_{\alpha,m}^{\mathcal{L}} f\|_{L^q(\mathbb{R}_+, x^{-Bq})} \le C \|f\|_{L^p(\mathbb{R}_+, x^{Ap})} 
\end{equation*}
provided that 
\begin{equation*}
1<p \le q<\infty \quad, \quad A< \frac{\alpha}{2}+\frac{1}{p'} \quad , \quad B<\frac{\alpha}{2}+\frac{1}{q},
\end{equation*}
 and that
\begin{equation*}
 \left( \frac{1}{q}-\frac{1}{p}\right)(\alpha+1)<A+B \le \sigma \left(\frac{1}{q}-\frac{1}{p} \right).
\end{equation*}

\item  If $M_{\alpha,m}^\varphi$ is a multiplier of Laplace transform type for the system $\{\varphi_k^\alpha\}_{k\ge 0}$  such that $(H1)$ and $(H2)$ hold, then
\begin{equation*}
\|M_{\alpha,m}^\varphi  f\|_{L^q(\mathbb{R}_+, x^{-Dq})} \le C \|f\|_{L^p(\mathbb{R}_+, x^{Cp})} 
\end{equation*}
provided that 
\begin{equation*}
1<p \le q<\infty \quad, \quad C< \alpha + \frac{1}{p'}+\frac12 \quad , \quad D< \alpha+\frac{1}{q}+\frac12
\end{equation*} 
and that
\begin{equation*}
 \left(\frac{1}{q}-\frac{1}{p}\right)(2\alpha+1) <C+D\le (2\sigma-1) \left(\frac{1}{q}-\frac{1}{p}\right).
\end{equation*}

\item If $M_{\alpha,m}^\psi$ is a multiplier of Laplace transform type for the system $\{\psi_k^\alpha\}_{k\ge 0}$  such that $(H1)$ and $(H2)$ hold, then
\begin{equation*}
\|M_{\alpha,m}^\psi f\|_{L^q(\mathbb{R}_+, x^{-Fq})} \le C \|f\|_{L^p(\mathbb{R}_+, x^{Ep})} 
\end{equation*}
provided that 
\begin{equation*} 
1<p \le q<\infty \quad, \quad E< 2\alpha + 1+ \frac{1}{p'} \quad , \quad F< \frac{1}{q}
\end{equation*} and that
\begin{equation*}
 \left(\frac{1}{q}-\frac{1}{p}\right)(2\alpha+1) <E+F\le (2\sigma-1) \left(\frac{1}{q}-\frac{1}{p}\right).
\end{equation*}

\end{enumerate}
\end{theorem}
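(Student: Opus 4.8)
The plan is to prove all three parts by \emph{transference}: each of the systems $\{\mathcal{L}_k^\alpha\}$, $\{\varphi_k^\alpha\}$, $\{\psi_k^\alpha\}$ is the image of $\{l_k^\alpha\}$ under one of the intertwining operators introduced above, so the corresponding multiplier is a conjugate of $M_{\alpha,m}$ and its mapping properties can be read off from Theorem \ref{main-result} together with the norm identities of Lemma \ref{lema-cambio}. Concretely, since $W^\alpha$ is an isometry from $L^2(\mathbb{R}_+)$ onto $L^2(\mathbb{R}_+,x^\alpha\,dx)$ carrying $\mathcal{L}_k^\alpha$ to $l_k^\alpha$, one checks on each basis element that
\[
M_{\alpha,m}^{\mathcal{L}} = (W^\alpha)^{-1}\, M_{\alpha,m}\, W^\alpha ,
\]
and, because $V$ and $Z^\alpha$ carry $\mathcal{L}_k^\alpha$ to $\varphi_k^\alpha$ and $\psi_k^\alpha$ respectively, that
\[
M_{\alpha,m}^{\varphi} = V\, M_{\alpha,m}^{\mathcal{L}}\, V^{-1},
\qquad
M_{\alpha,m}^{\psi} = Z^\alpha\, M_{\alpha,m}^{\mathcal{L}}\, (Z^\alpha)^{-1}.
\]
Thus it suffices to bound each elementary conjugation and compose.

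For part (1) I would write $f = (W^\alpha)^{-1}h = y^{\alpha/2}h$, so that $M_{\alpha,m}^{\mathcal{L}}f = y^{\alpha/2} M_{\alpha,m}h$ with $h=W^\alpha f$. A direct computation of the weighted norms (equivalently, Lemma \ref{lema-cambio}(1)) shows that the target norm $\|\cdot\|_{L^q(\mathbb{R}_+,x^{-Bq})}$ of the output and the source norm $\|\cdot\|_{L^p(\mathbb{R}_+,x^{Ap})}$ of $f$ become exactly the norms $\|\cdot\|_{L^q(x^{\alpha-bq})}$ and $\|\cdot\|_{L^p(x^{\alpha+ap})}$ appearing in Theorem \ref{main-result}, under the substitution
\[
a = A + \alpha\left(\tfrac12 - \tfrac1p\right),
\qquad
b = B + \alpha\left(\tfrac1q - \tfrac12\right).
\]
Feeding these relations into the hypotheses of Theorem \ref{main-result} turns $a<\frac{\alpha+1}{p'}$ and $b<\frac{\alpha+1}{q}$ into $A<\frac{\alpha}{2}+\frac{1}{p'}$ and $B<\frac{\alpha}{2}+\frac1q$, and converts the admissible range for $a+b$ into the corresponding range for $A+B$; applying Theorem \ref{main-result} to $h$ then yields the claimed estimate.

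For parts (2) and (3) the same scheme applies, now conjugating the operator of part (1) by $V$ and by $Z^\alpha$ and invoking Lemma \ref{lema-cambio}(2) and (3) to transfer the weighted norms. The essential new feature is that $V$ and $Z^\alpha$ incorporate the change of variables $y\mapsto y^2$, which rescales the weight exponents and doubles the effective smoothing; this is precisely what produces the factor $2\sigma-1$ in the scaling conditions of parts (2) and (3), in analogy with the passage from Laguerre to Hermite carried out in Section 4. Tracking these substitutions through the inequalities established in part (1) gives the stated ranges for $C,D$ and for $E,F$.

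The one point requiring care---and the main obstacle---is the rigorous justification of the conjugation identities, since $M_{\alpha,m}$ (and hence its conjugates) is a priori only defined through the Abel-summation and density procedure of Section 2 rather than by a convergent series. I would handle this exactly as in Step 2 of the proof of Theorem \ref{main-result}: verify each identity first on the dense subspace spanned by finitely many basis functions (the images under $(W^\alpha)^{-1}$, $V(W^\alpha)^{-1}$ and $Z^\alpha(W^\alpha)^{-1}$ of the space $E$), where all the relevant series terminate and the manipulations are elementary, and then extend by density using the boundedness just obtained. Once this is in place, the remainder of the argument is the bookkeeping of exponents described above.
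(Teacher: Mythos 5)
Your proposal is correct and follows essentially the same route as the paper's proof: the paper likewise writes $M_{\alpha,m} = W^\alpha M^{\mathcal{L}}_{\alpha,m}(W^\alpha)^{-1}$, transfers the weighted norms via Lemma \ref{lema-cambio}, obtains precisely your exponent relations ($Ap = ap-\alpha(\tfrac{p}{2}-1)$ and $-Bq = -bq-\alpha(\tfrac{q}{2}-1)$, i.e.\ $a = A+\alpha(\tfrac12-\tfrac1p)$, $b = B+\alpha(\tfrac1q-\tfrac12)$), and treats parts (2) and (3) as analogous conjugations by $V$ and $Z^\alpha$. If anything, your explicit density argument justifying the conjugation identities is more careful than the paper's, which simply exhibits the commutative diagram and asserts the conclusion.
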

\begin{proof}
We explain how to prove (1), since the other cases are analogous. From the fact that $W^\alpha \mathcal{L}_k^\alpha =l_k^\alpha$ and by Lemma \ref{lema-cambio}(1), we have the following diagram

\begin{equation*}
\begin{array}{cccc}
 & L^p(\mathbb{R}_+, x^{ap+\alpha}) &\stackrel{M_{\alpha,m}}\longrightarrow & L^q(\mathbb{R}_+ , x^{-bq+\alpha})
 \\   & (W^\alpha)^{-1}  \Big\downarrow & &  \Big\uparrow W^\alpha
 \\ &  L^p(\mathbb{R}_+, x^{Ap}) & \stackrel{M_{\alpha,m}^\mathcal{L}}\longrightarrow  & L^q(\mathbb{R}_+ , x^{-Bq})
\end{array}
\end{equation*}
provided that 
\begin{equation}
\label{condicionAa}
Ap = ap - \alpha \left( \frac{p}{2}-1 \right) \quad \mbox{and}Ê\quad -Bq = -bq - \alpha\left( \frac{q}{2}-1\right).
\end{equation}
and $M_{\alpha, m} = W^\alpha M_{\alpha,m}^\mathcal{L} (W^\alpha)^{-1}$. Therefore, the identities \eqref{condicionAa} together with the  conditions on $a,b$ given by Theorem \ref{main-result} imply the desired result.
\end{proof}

\section{Proof of Theorem \ref{teorema-hermite}}

In this section we exploit the well-known relation between  Hermite and Laguerre poynomials to obtain an analogous result to that of Section 2  in the Hermite case.  Indeed, recalling that 
\begin{align*} 
H_{2k}(x) & = (-1)^k 2^{2k} k! L_k^{-\frac12} (x^2)
\\ H_{2k+1}(x) & = (-1)^k 2^{2k} k! x L_k^{\frac12} (x^2) 
\end{align*}
it is immediate that
\begin{align*}
h_{2k}(x) &= l^{-1/2}_k(x^2)
\\ h_{2k+1}(x) & = x l^{\frac12}_k(x^2)
\end{align*}

It is then natural to decompose $f=f_0+f_1$ where 
$$ f_0(x)=\frac{f(x)+f(-x)}{2} \quad , \quad f_1(x)= \frac{f(x)-f(-x)}{2} $$
and, clearly, when $k=2j$, if we let  $g_0(y)= f_0(\sqrt{y})$ we obtain:
$$ c_k(f)  = \langle f_0, h_k \rangle =  2 \int_0^\infty f_0(x) l^{-\frac12}_j(x^2) \; dx = a_{-\frac12,j}(g_0) $$
while if $k=2j+1$, and we let $g_1(y)= \frac{1}{\sqrt{y}} f_1(\sqrt{y})$ we have:
 $$ c_k(f) =  \langle f_1, h_k \rangle  = 2 \int_0^\infty f_1(x) x  l^{\frac12}_j(x^2) \; dx = a_{\frac12,j}(g_1) $$

Then,
\begin{align*}
M_{H,m} f (x) & = \sum_{j=0}^\infty m_{2j} a_{-\frac12,j}(g_0 ) l^{-\frac12}_j(x^2) + \sum_{j=0}^\infty m_{2j+1} a_{\frac12,j}( g_1) x l^{\frac12}_j(x^2)
\\ & = M_{-\frac12,m_0} g_0(x^2) + x M_{\frac12, m_1} g_1(x^2)
\end{align*}
where $(m_0)_k=m_{2k}$ and $(m_1)_k = m_{2k+1}$.

To apply Theorem \ref{main-result} to this decomposition, we need to check  first that $m_0$ and $m_1$ are Laplace-Stiltjes functions of certain functions $\Psi_0$ and $\Psi_1$. Indeed, notice that $m_{2k}= \mathfrak{L}\Psi_0(k)$ where 
$$\Psi_0(u)=\frac12 \Psi(\frac{u}{2})$$
 and 
$m_{2k+1}= \mathfrak{L} \Psi_1(k)$ where 
$$\Psi_1(u)= \frac12  \int_0^\frac{u}{2} e^{-\tau} d\Psi(\tau).$$

It is also easy to see that $\Psi_0$ satisfies the hypotheses of Theorem \ref{main-result} for $\alpha=-\frac12$ whereas  $\Psi_1$ satisfies the hypotheses for $\alpha=\frac12$ (in this case condition $(H2)$ follows after an integration by parts). 

Then,
\begin{align}
\nonumber \|M_{H,m}f |x|^{-b}\|_{L^q(\mathbb{R})} & = \left( \int_{\mathbb{R}} |M_{H,m}f(x)|^q |x|^{-bq} \, dx\right)^{\frac{1}{q}}
\\ & = C \left( \int_{\mathbb{R}} \left|M_{-\frac12,m_0} g_0(x^2) +  x M_{\frac12, m_1} g_1(x^2) \right|^q |x|^{-bq} \, dx\right)^{\frac{1}{q}} \label{Mh}
\end{align}

Using Minkowski's inequality and making the change of variables $y=x^2, dx= \frac12 y^{-\frac12} \, dy$, we see that
\begin{align*}
\eqref{Mh} &\sim \left( \int \left|M_{-\frac12,m_0} g_0(y)\right|^q |y|^{-\frac{bq}{2}-\frac12} \, dy \right)^{\frac{1}{q}} +  \left( \int  \left|M_{\frac12, m_1} g_1(y)\right|^q |y|^{\frac{(-b+1)q}{2}-\frac12} \, dy\right)^{\frac{1}{q}}
\\ & = \| M_{-\frac12,m_0} g_0(y) |y|^{-\frac{b}{2}}\|_{L^q(\mathbb{R}, x^{-\frac12} \, dx)} + \|M_{\frac12, m_1} g_1(y) |y|^{\frac{-b+1}{2}-\frac{1}{q}} \|_{L^q(\mathbb{R}, x^\frac12 \, dx)}
\\ &\le C \|g_0(y) |y|^{\tilde a}\|_{L^p(\mathbb{R}, x^{-\frac12} \, dx)} + C  \|g_1(y) |y|^{\hat a}\|_{L^p(\mathbb{R}, x^{\frac12} \, dx)}
\end{align*}
where the last inequality follows from Theorem  \ref{main-result} provided that:
 \begin{equation*}
 \tilde a < \frac{1}{2p'} \quad , \quad b < \frac{1}{q}
 \end{equation*}
 \begin{equation}
 \label{resc1}
0\le \tilde a +\frac{b}{2} \le \frac12 \left(\frac{1}{q}-\frac{1}{p} \right)+\sigma
 \end{equation}
 \begin{equation*}
 \hat a < \frac{3}{2p'} 
  \end{equation*}
  and
 \begin{equation}
 \label{resc2}
\left( \frac{1}{q}-\frac{1}{p} \right) \le \hat a + \frac{1}{q} -\frac{1-b}{2} \le \frac32 \left( \frac{1}{q}-\frac{1}{p}\right)+ \sigma.
\end{equation}
Therefore,
\begin{align*}
\|M_{H,m}f |x|^{-b}\|_{L^q(\mathbb{R})}  & \le C \left( \int |g_0(x)|^p |x|^{\tilde a p-\frac12} \, dx \right)^{\frac{1}{p}} + C \left( \int |g_1(x)|^p |x|^{\hat a p + \frac12} \, dx\right)^{\frac{1}{p}}
\\ &= C \left( \int |f_0(\sqrt{x})|^p |x|^{\tilde a p-\frac12} \, dx \right)^{\frac{1}{p}} + C \left( \int |f_1(\sqrt{x})|^p |x|^{\hat a p + \frac12-\frac{p}{2}} \, dx\right)^{\frac{1}{p}}
\\ & = C \left( \int |f_0(x)|^p |x|^{2\tilde a p} \, dx \right)^{\frac{1}{p}} + C \left( \int |f_1(x)|^p |x|^{2\hat a p + 2-p} \, dx\right)^{\frac{1}{p}}
\\ & \le C \|f(x) |x|^a\|_{L^p(\mathbb{R})}
\end{align*}
provided that
\begin{equation}
\label{aatildeahat}
a = 2\tilde a = 2 \hat a + \frac{2}{p}-1.
\end{equation}

Therefore, by \eqref{aatildeahat} and the conditions on $ \tilde a, \hat a$, there must hold
$$
a<\frac{1}{p'}
$$
while, by  \eqref{aatildeahat},  \eqref{resc1} and \eqref{resc2} are equivalent to 
$$
0 \le a+b \le \frac{1}{q} - \frac{1}{p}+ 2\sigma.
$$

\begin{remark}
It follows from the proof of Theorem \ref{teorema-hermite} that a better result holds if the function $f$ is odd. 
\end{remark}

\section{Examples and further remarks}

First, we should point out that it is clear  that, since a Stieltjes integral of a continuous function with respect to a
function of bounded variation can be thought as an integral with respect to the
corresponding Lebesgue-Stieltjes measure, we could equivalently have formulated all our results in terms of integrals with respect to signed Borel measures in $\R_+$. However, we
have found convenient to use the framework of
Stieltjes integrals since many of the classical references on Laplace transforms are written in that
framework (for instance \cite{Widder}), and leave the details of a possible restatement of the theorems in the case of regular Borel measure to the reader.

We also recall that the Laplace-Stieltjes transform contains as particular cases both the ordinary Laplace transform of (locally integrable) functions (when $\Psi(t)$ is absolutely continuous), and Dirichlet series (see  below). In particular, if $\Psi$ is absolutely continuous and
$\phi(t)=\Psi^\prime(t)$ (defined almost everywhere), the assumptions $(H1)$ and $(H2)$ of Theorem \ref{main-result} can be replaced by:
\begin{itemize}
\item[(H1ac)] $$ \int_0^\infty |\phi(x)| \; dx < +\infty \quad
\hbox{i.e.} \; \phi \in L^1(\R_+) $$
\item[(H2ac)] there exist $\delta>0$,  $0 < \sigma < \alpha+1$, and $C>0$ such that
$$ \left|\int_0^t \phi(x) \; dx \right| \leq C t^{\sigma} \quad \hbox{for} \; 0 < t \leq \delta. $$
\end{itemize}
In particular, assumption $(H2ac)$ holds if $\phi(t)=O(t^{\sigma-1})$ when $t \to 0$.

As we have already mentioned in the introduction, B. Wr\'obel \cite[Corollary 2.7]{W} has recently proved that Laplace type multipliers for the system $\{\varphi_k^\alpha\}_{k\ge 0}$ are  bounded on $L^p(\mathbb{R}^d,\omega)$, $1<p<\infty$, for all $\omega \in A_p$ and $\alpha \in (\{-\frac12\} \cup [\frac12, \infty))^d$. In the case of power weights in one dimension this means that $\omega(x)=|x|^\beta$ must satisfy $-1<\beta<p-1$, while  taking $p=q$ and letting the weight be $|x|^\beta$ on both sides, Theorem \ref{teo31}(2) can easily be seen to imply $-1-p\left(\alpha+\frac12 \right)<\beta<p-1+ p\left(\alpha +\frac12\right)$. 

Also, weighted estimates had been obtained before for the case of some particular operators for the system $\{l_k^\alpha\}_{k\ge 0}$. Indeed, recall that  one of the main examples of the kind of multipliers we are considering is the 
Laguerre  fractional integral introduced in \cite{GST}, which corresponds to the choice
$m_k=(k+1)^{-\sigma}$. 

In \cite[Theorem 4.2]{Nowak-Stempak}, A. Nowak and K.  Stempak considered multi-dimensional Laguerre expansions and used
a slightly different definition of the fractional integral operator,  given by the negative powers of the differential operator  \eqref{laguerre}.

 As they point out,  their theorem contains as a special case the result of \cite{GST} (in the one dimensional case). To  see that both operators are indeed equivalent, they rely on a deep multiplier theorem \cite[Theorem 1.1]{Stempak-Trebels}.

Instead, we can see that Theorem \ref{main-result} is applicable to both definitions by choosing:
$$ m_k=(k+c)^{-\sigma}, \quad \phi(t)= \frac{1}{\Gamma(\sigma)} t^{\sigma-1} e^{-ct} \quad
(c>0) $$
 The case $c=1$ corresponds to the definition in \cite{GST}, whereas the choice $c=\frac{\alpha+1}{2}$ corresponds to the definition in \cite{Nowak-Stempak}. Therefore, Theorem \ref{main-result} applied to these choices, coincides in the first case with the result of \cite[Theorem 1]{GT} (which is an improvement of \cite[Theorem 3.1]{GST}) and improves in the second case the one-dimensional result of \cite[Theorem 4.2]{Nowak-Stempak}.
 
 The same choice of $m_k$ and $\phi$ in Theorem \ref{teorema-hermite} gives a two-weight estimate for the Hermite fractional integral, which corresponds to the one-dimensional version of  \cite[Theorem 2.5]{Nowak-Stempak}. 

Another interesting example is the operator $(L^2+I)^{-\frac{\alpha}{2}}$, where $L$ is given by \eqref{laguerre}. In this case, Theorem \ref{main-result}  with hypotheses $(H1ac)$ and $(H2ac)$ instead of $(H1)$ and $(H2)$ applies with $\alpha= \sigma$ and 
$$
\phi(t)= \frac{1}{C_\alpha} e^{-\frac{\alpha+1}{2}t}J_{\frac{\alpha-1}{2}} (t) t^{\frac{\alpha-1}{2}}
$$
since, by \cite[formula 5, p. 386]{Watson},
$$
\int_0^\infty e^{-st} J_{\frac{\alpha -1}{2}}(t) t^{\frac{\alpha-1}{2}} \, dt = C_\alpha (s^2+1)^{-\frac{\alpha}{2}}
$$
and, when $t\to 0$, $J_{\frac{\alpha-1}{2}} (t) t^{\frac{\alpha-1}{2}} \sim t^{\alpha-1}$.

A further  example is obtained by choosing
$\Psi(t)=e^{-s_0t} H(t-\tau)$  
with $s_0=\frac{\alpha+1}{2}$, where $H$ is the Heaviside unit step function:
$$ 
H(t) = \left\{
\begin{array}{rcl}
1 & \hbox{if} & t \geq 0 \\
0 & \hbox{if} & t < 0 \\
\end{array} \right. 
$$
and we see that Theorem \ref{main-result} is applicable to the Heat diffusion semigroup (considered for instance in \cite{Stempak-heat-diffusion} and \cite{MST}) 
$$ M_{\tau} = e^{-\tau L} $$
 associated to the operator $L$ for any $\sigma>0$. More generally, the same conclusion holds for 
$$\Psi(t)= \sum_{n=1}^\infty a_n e^{-s_0t}  H(t-\tau_n )  $$
 provided that the Dirichlet series
$$ F(s)= \sum_{n=1}^\infty a_n e^{-\tau_n s}, \quad 0 < \tau_1 < \tau_2 < \ldots $$
 conveges absolutely for $s=s_0$ (which corresponds to
hypothesis $(H1)$).

As a final comment, we remark that finding a function $\Psi$  of bounded variation such that  $m_k = \mathfrak{L} \Psi(k)$  holds (see \eqref{Laplace.transform}) is equivalent to solving the clasical Hausdorff moment problem (see \cite[Chapter III]{Widder}).

\medskip

{\bf Acknowledgements.} We wish to thank Professor K. Stempak for bringing into our attention the connection between the generalized euclidean convolution and our previous results on fractional integrals of radially symmetric functions, and for helpful comments and corrections. 

We are also indebted to Professor J. L. Torrea for pointing to us that our results  could be transferred from one Laguerre system to the others, and for giving us reference \cite{AMST}.

\end{document}